\documentclass[11pt]{article}
\usepackage{amsmath, amsthm, amssymb, mathtools}
\usepackage{float,epsfig}
\usepackage{graphicx}
\usepackage{rotating}
\usepackage{subfigure}
\usepackage{caption}
\usepackage{subcaption}
\usepackage{tkz-graph}
\usepackage{color}
\usepackage{algorithm}
\usepackage{multicol}
\usepackage[utf8]{inputenc}
\usepackage{braket}
\usepackage{subcaption, multirow}
\usepackage{qcircuit}
\usepackage[overload]{empheq}
\usepackage{hyperref}
\usepackage{xcolor}
\usepackage{color,graphicx,float,lscape,enumerate}
\usepackage{algorithm}
\usepackage{svg}

\usepackage[noend]{algpseudocode}
\usepackage{mathtools}

\usepackage{hyperref}

\usepackage{xparse}

\usepackage{lipsum}



\begin{document}

\newtheorem{theorem}{\bf Theorem}[section]
\newtheorem{proposition}[theorem]{\bf Proposition}
\newtheorem{definition}[theorem]{\bf Definition}
\newtheorem{corollary}[theorem]{\bf Corollary}
\newtheorem{example}[theorem]{\bf Example}
\newtheorem{exam}[theorem]{\bf Example}
\newtheorem{remark}[theorem]{\bf Remark}
\newtheorem{lemma}[theorem]{\bf Lemma}
\newcommand{\nrm}[1]{|\!|\!| {#1} |\!|\!|}

\newcommand{\calL}{{\mathcal L}}
\newcommand{\calX}{{\mathcal X}}
\newcommand{\calY}{{\mathcal Y}}
\newcommand{\calZ}{{\mathcal Z}}
\newcommand{\calW}{{\mathcal W}}
\newcommand{\calA}{{\mathcal A}}
\newcommand{\calB}{{\mathcal B}}
\newcommand{\calC}{{\mathcal C}}
\newcommand{\calK}{{\mathcal K}}
\newcommand{\C}{{\mathbb C}}
\newcommand{\Z}{{\mathbb Z}}
\newcommand{\R}{{\mathbb R}}
\renewcommand{\SS}{{\mathbb S}}
\newcommand{\LL}{{\mathbb L}}
\newcommand{\st}{{\star}}
\def\kernel{\mathop{\rm kernel}\nolimits}
\def\sigan{\mathop{\rm span}\nolimits}

\newcommand{\klasse}{{\boldsymbol \Delta}}

\newcommand{\ba}{\begin{array}}
\newcommand{\ea}{\end{array}}
\newcommand{\von}{\vskip 1ex}
\newcommand{\vone}{\vskip 2ex}
\newcommand{\vtwo}{\vskip 4ex}
\newcommand{\dm}[1]{ {\displaystyle{#1} } }

\newcommand{\be}{\begin{equation}}
\newcommand{\ee}{\end{equation}}
\newcommand{\beano}{\begin{eqnarray*}}
\newcommand{\eeano}{\end{eqnarray*}}
\newcommand{\inp}[2]{\langle {#1} ,\,{#2} \rangle}
\def\bmatrix#1{\left[ \begin{matrix} #1 \end{matrix} \right]}
\def\cmatrix#1{\left( \begin{matrix} #1 \end{matrix} \right)}
\def \noin{\noindent}
\newcommand{\evenindex}{\Pi_e}



\def \R{{\mathbb R}}
\def \C{{\mathbb C}}
\def \F{{\mathbb F}}
\def \K{{\mathbb K}}
\def \H{{\mathbb H}}
\def \cu{\mathrm{CU}}
\def \calN{{\mathcal N}}
\def \T{{\mathbb T}}
\def \Pb{\mathrm{P}}
\def \N{{\mathbb N}}
\def \Ib{\mathrm{I}}
\def \Ls{{\Lambda}_{m-1}}
\def \Gb{\mathrm{G}}
\def \Hb{\mathrm{H}}
\def \Lam{{\Lambda}}

\def \Qb{\mathrm{Q}}
\def \Rb{\mathrm{R}}
\def \Mb{\mathrm{M}}
\def \norm{\nrm{\cdot}\equiv \nrm{\cdot}}

\def \A{{{\mathbb P}_1(\C^{n\times n})}}
\def \H{{\mathbb H}}
\def \L{{\mathbb L}}
\def \G{{\F_{\tt{H}}}}
\def \S{\mathbb{S}}
\def \s{\mathbb{s}}
\def \sigmin{\sigma_{\min}}
\def \elam{\Lambda_{\epsilon}}
\def \slam{\Lambda^{\S}_{\epsilon}}
\def \Ib{\mathrm{I}}
\def \Tb{\mathrm{T}}
\def \d{{\delta}}

\def \Lb{\mathrm{L}}
\def \N{{\mathbb N}}
\def \Ls{{\Lambda}_{m-1}}
\def \Gb{\mathrm{G}}
\def \Hb{\mathrm{H}}
\def \Delta{\triangle}
\def \Rar{\Rightarrow}
\def \p{{\mathsf{p}(\lam; v)}}

\def \D{{\mathbb D}}

\def \tr{\mathrm{Tr}}
\def \cond{\mathrm{cond}}
\def \lam{\lambda}
\def \sig{\sigma}
\def \sign{\mathrm{sign}}

\def \ep{\epsilon}
\def \diag{\mathrm{diag}}
\def \rev{\mathrm{rev}}
\def \vec{\mathrm{vec}}

\def \ham{\mathsf{Ham}}
\def \herm{\mathsf{Herm}}
\def \sym{\mathsf{sym}}
\def \odd{\mathsf{sym}}
\def \en{\mathrm{even}}
\def \rank{\mathrm{rank}}
\def \pf{{\bf Proof: }}
\def \dist{\mathrm{dist}}
\def \rar{\rightarrow}

\def \rank{\mathrm{rank}}
\def \pf{{\bf Proof: }}
\def \dist{\mathrm{dist}}
\def \Re{\mathsf{Re}}
\def \Im{\mathsf{Im}}
\def \re{\mathsf{re}}
\def \im{\mathsf{im}}

\def \sym{\mathsf{CSym}}
\def \sksym{\mathsf{skew\mbox{-}sym}}
\def \odd{\mathrm{odd}}
\def \even{\mathrm{even}}
\def \herm{\mathsf{Herm}}
\def \skherm{\mathsf{skew\mbox{-}Herm}}
\def \str{\mathrm{ Struct}}
\def \cnot{\mathrm{CNOT}}
\def \eproof{$\blacksquare$}

\def \U{\mathsf{U}}
\def \G{\mathsf{G}}
\def \bS{{\bf S}}
\def \cA{{\cal A}}
\def \E{{\mathcal E}}
\def \X{{\mathcal X}}
\def \cH{\mathcal{H}}
\def \cJ{\mathcal{J}}
\def \tr{\mathrm{Tr}}
\def \range{\mathrm{Range}}
\def \adj{\star}

\def \pal{\mathrm{palindromic}}
\def \palpen{\mathrm{palindromic~~ pencil}}
\def \palpoly{\mathrm{palindromic~~ polynomial}}
\def \odd{\mathrm{odd}}
\def \even{\mathrm{even}}
\def \QT{{\texttt{QT}}}

\newcommand{\tm}[1]{\textcolor{magenta}{ #1}}
\newcommand{\tre}[1]{\textcolor{red}{ #1}}
\newcommand{\tb}[1]{\textcolor{blue}{ #1}}
\newcommand{\tg}[1]{\textcolor{green}{ #1}}

\title{
Signed network models for dimensionality reduction of portfolio optimization}
\author{Bibhas Adhikari\thanks{E-mail:
badhikari@fujitsu.com}\\ Fujitsu Research of America Inc.\\ Santa Clara, California, USA 
}

\date{}

\maketitle
\thispagestyle{empty}

\noindent{\bf Abstract.} 
In this paper, we develop a time-series-based signed network model for dimensionality reduction in portfolio optimization, grounded in Markowitz’s portfolio theory and extended to incorporate higher-order moments of asset return distributions. Unlike traditional correlation-based approaches, we construct a complete signed graph for each trading day within a specified time window, where the sign of an edge between a pair of assets is determined by the relative behavior of their log returns with respect to their mean returns. Within this framework, we introduce a combinatorial interpretation of higher-order moments, showing that maximizing skewness and minimizing kurtosis correspond to maximizing balanced triangles and balanced $4$-cliques with specific signed edge configurations respectively. We establish that the latter leads to an NP-hard combinatorial optimization problem, while the former is naturally guaranteed by the structural properties of the signed graph model. Based on this interpretation, we propose a dimensionality reduction method using a combinatorial formulation of the mean-variance optimization problem through a combinatorial hedge score metric for assets. The proposed framework is validated through extensive backtesting on 199 S\&P 500 assets over a 16-year period (2006 - 2021), demonstrating the effectiveness of reduced asset universes for portfolio construction using both Markowitz optimization and equally weighted strategy.\\

\noindent\textbf{Keywords.} Signed graph, Markowitz's portfolio theory, Higher-order moments, Structural balance


\section{Introduction} 

Financial portfolio optimization is a fundamental problem in computational finance, with Markowitz’s mean-variance framework providing the classical formulation for balancing expected return and risk \cite{markowitz1952portfolio}. However, this framework assumes Gaussian returns and does not account for higher-order moments such as skewness and kurtosis, which are essential for capturing asymmetric and heavy-tailed behaviors observed in real financial data. As a result, extended formulations incorporating mean, variance, skewness, and kurtosis have been widely studied, leading to a multi-objective optimization problem that seeks to maximize return and skewness while minimizing variance and kurtosis, see 
\cite{jondeau2006optimal} \cite{harvey2010portfolio} \cite{ban2018machine}
\cite{brito2019portfolio} \cite{ashfaq2021gainers} \cite{zhou2021solving} \cite{uberti2023theoretical} \cite{abid2025pgp} and the references therein. Indeed, for $N$ assets in a market with $\mu\in \R^N$ denotes the mean return vector of the assets in a time window $T$, the portfolio optimization problem with no short-selling is to determine portfolio weight vector $w=[w_j,]\in\R^N_{\geq 0}$, $\sum_j w_j= 1$ such that the expected return $w^\dagger\mu$ and the skewness $S(w)=\mathbb{E}[(w^\dagger(R-\mu))^3]$  are maximized, and to minimize the variance $V(w)=w^\dagger\Sigma w$ and the kurtosis $K(w)=\mathbb{E}[(w^\dagger(R-\mu))^4].$
Here $\mathbb{E}[X]$ denotes the expectation of a random variable $X.$ Incorporating higher moments significantly increases computational and statistical complexity due to the non-convex nature of the objective and the high dimensionality of covariance and higher-order moment estimators, rendering the problem NP-hard in general \cite{murty1985some}. Consequently, dimensionality reduction and heuristic optimization methods have become essential for identifying manageable subsets of assets while preserving desirable portfolio characteristics, particularly in large financial markets.

Alternative strategies, such as equally weighted portfolios, have also demonstrated competitive performance, but they face practical limitations when managing very large asset sets \cite{demiguel2009optimal} \cite{maillard2010properties} \cite{pflug20121} \cite{gelmini2024equally}. Consequently, selecting a reduced and well-structured subset of assets for equally weighted portfolio formation incorporating mean-variance framework remains a central challenge in portfolio optimization, particularly when higher-order moment considerations are included \cite{jiang2019combining}.

The framework of combinatorial graphs or networks serves as a powerful mathematical tool across a variety of data analysis techniques. In financial applications, networks play a central role in modeling dependencies among assets via their correlation strengths. By representing assets as vertices and encoding correlations as (weighted) edges, numerous methods have been developed for tasks such as asset‐price prediction and risk analysis \cite{chi2010network} \cite{mantegna1999hierarchical} \cite{barigozzi2019nets} \cite{chen2020correlation} \cite{hautsch2015financial} \cite{peralta2016network} \cite{gregnanin2024signature}. Compared to purely statistical approaches, network analysis offers the advantage of capturing both pairwise interactions and higher‐order group dynamics among assets. Several surveys and monographs explore the role of networks in finance and economics more broadly \cite{kenett2015network} \cite{jackson2014networks}  \cite{acemoglu2015systemic}. Indeed, for finite time-series data with a time period $T<\infty,$ there is a random offset to every correlation coefficient and these values are dressed up with noise \cite{guhr2003new}, it can be validated by comparing eigenvalue density of a correlation matrix to a random matrix \cite{laloux1999noise}. An important observation from the financial data is that the effect of noise strongly depends on the ratio $N/T$, where $N$ is the size of the portfolio and $T$ the length of the available time series \cite{pafka2003noisy}, see also \cite{kondor2007noise} \cite{chung2022effects}.
There are primarily two approaches adopted to address these noise effects: noise reduction and thresholding. Noise reduction techniques aim to mitigate spurious correlations, while thresholding methods are employed to filter correlation strengths based on prescribed bounds; see \cite{kondor2007noise} \cite{millington2020partial} and the references therein. More recently, machine learning-based methods have been introduced to tackle this problem by learning robust representations of correlations directly \cite{castilho2024forecasting} \cite{mattera2023shrinkage}.

Inspired by the early work of Harary et al. on modeling financial markets using signed graphs, we adopt a weighted signed graph framework to represent financial markets. Within this framework, we demonstrate that negative edges naturally act as hedges, in alignment with Markowitz’s mean-variance portfolio theory for risk containment in portfolio formation. A primary motivation of our work is to develop a combinatorial interpretation of moment-based portfolio optimization, thereby enabling the use of graph-theoretic and combinatorial algorithms, both classical and quantum, to address such problems. This perspective provides a bridge between traditional portfolio theory and algorithmic frameworks, facilitating new computational approaches to higher-moment optimization. 

The main contribution of the paper are as follows. 
\begin{itemize}
    \item We introduce  \emph{time-evolving complete signed graphs} from financial return data, capturing local dynamics of a financial market, such as a stock market, without explicitly relying on correlation strengths. 
    \item We propose \emph{hedge score} for an asset as a measure of persistent negative co-movement structure, which serves as a first-stage filtering mechanism for dimensionality reduction of the portfolio optimization problem.
    \item We formulate a \emph{combinatorial optimization problem based on signed motifs}, providing an interpretable proxy for higher-order dependence beyond covariance.
\end{itemize}

We show that maximizing skewness and minimizing kurtosis of portfolio returns can be translated into combinatorial objectives: maximizing the number of balanced triangles and maximizing the number of balanced $4$-cliques of specific signed configuration, denoted as $K_4^{B_2}$ (see Figure \ref{fig:bK4}), in the induced signed subgraph corresponding to a risk-containing portfolio. Further, we show that the objective of finding subgraph of a given size $K$ that maximizes the density of $K_4^{B_2}$ is an NP-hard combinatorial problem. Consequently, our dimension reduction framework is currently limited to a hedge score based approach, which admits a combinatorial interpretation of risk reduction within the Markowitz mean-variance paradigm. Developing efficient classical approximation methods for counting $K_4^{B_2}$ in complete signed networks remains an open problem. In this direction, quantum-inspired techniques such as \cite{kordonowy2026perfectly} \cite{adhikari2026quantum} can offer a promising avenue for future investigation. 

To evaluate the performance of the proposed method, we analyze annual return, annual volatility, and Sharpe ratio for portfolios constructed using both Markowitz optimization and equally weighted strategies, applied to reduced asset universes as well as the full universe. We conduct extensive backtesting using 199 stocks, aligned with Google stock data, from S\&P 500 market data spanning 16 years (2006 - 2021), where portfolios are constructed in year 
$y$ and evaluated in year $y+1.$ Our numerical results indicate that portfolios formed from reduced universes consistently perform moderately better than those formed from the full universe, in all most all the years. The proposed combinatorial formulation of dimensionality reduction in portfolio optimization with higher-order moments introduces fundamentally challenging combinatorial problems, thereby opening new avenues for theoretical and algorithmic research.

Note that the proposed framework is not merely a screening method for dimension reduction, but a structural reinterpretation of portfolio selection grounded in Markowitz's theory and enriched by higher-order dependence captured through signed networks. Unlike traditional approaches that rely on pairwise dependence (e.g., clustering or sparse methods), our formulation incorporates higher-order interactions via signed motifs such as balanced triangles and $K_4^{B_2}$ patterns. The signed graph construction captures directional co-movements relative to asset-specific baselines, providing a dynamic view of hedging relationships beyond covariance. Moreover, the combinatorial formulation offers an interpretable proxy for higher moments, linking network structure to skewness and kurtosis of asset returns. Thus, the proposed approach inherits the core properties of mean-variance optimization while enriching the dependence structure through signed networks that capture higher-order interactions beyond pairwise relationships.

Furthermore, note that quantum computing constitutes a fundamentally novel paradigm for portfolio optimization. A spectrum of quantum algorithmic frameworks including quantum annealing, variational quantum algorithms such as the Quantum Approximate Optimization Algorithm (QAOA) has been employed to address Markowitz’s mean–variance problem \cite{hegade2022portfolio} \cite{leipold2024train} \cite{kordonowy2025lie} \cite{soloviev2025scaling}. These approaches are intrinsically designed for large‐scale instances, however, their implementation on fault‐tolerant hardware remains a future prospect. In contrast, Noisy Intermediate‐Scale Quantum (NISQ) platforms enable empirical assessment of both purely quantum and hybrid quantum-classical algorithms on moderately sized portfolios \cite{buonaiuto2023best}. By integrating our dimension‐reduction methodology with the operational capacity of NISQ devices, we show a promising pathway for the practical realization of hybrid quantum-classical portfolio optimization.

We emphasize that this work is an extension of our earlier study \cite{adhikari2025signed}, which was accepted and presented at the 14th International Conference on Complex Networks and their Applications (December 9–11, 2025, Binghamton, New York, USA). Compared to the conference version, the present paper introduces several significant advancements. In particular, we develop a combinatorial interpretation of higher-order moments within the proposed signed network model of financial markets (Section \ref{Sec:3}), formulate a corresponding dimensionality reduction framework based on this interpretation, and establish that the resulting $4$-clique-based combinatorial optimization problem is NP hard (Section \ref{Sec:4}). Apart for that, we validate the proposed framework on a substantially larger dataset comprising 199 assets, compared to the smaller dataset of 21 assets considered in the conference paper (Section \ref{Sec:5}). 

The remainder of the paper is organized as follows. Section \ref{Sec:2} provides a brief review of the preliminary notions and definitions used throughout the paper. Section \ref{Sec:3} introduces the proposed signed network model for financial markets, along with the notion of hedge score for assets and a combinatorial interpretation of higher-order moments, namely skewness and kurtosis. Section \ref{Sec:4} presents the proposed dimensionality reduction framework based on the corresponding combinatorial optimization formulations. Finally, Section \ref{Sec:5} provides an empirical evaluation of the proposed method using financial data from the S\&P 500 market.

\section{Preliminaries}\label{Sec:2} 

In this section, we provide a brief review of signed graphs, Markowitz’s mean–variance optimization problem, and the notation that will be used in the sequel.

A signed graph augments a standard combinatorial graph by assigning each edge a sign:  positive or negative. When vertices represent random variables, a positive (resp.\ negative) edge indicates that the corresponding variables are positively (resp.\ negatively) correlated. For a comprehensive review of signed‐graph theory and its applications, see Zaslavsky’s annotated bibliography of recent developments \cite{zaslavsky2012mathematical}. Structural balance theory, which hinges on the sign‐configuration of triangles, is fundamental in the study of signed social networks \cite{cartwright1956structural} \cite{harary1953notion}. Triangles are classified by the number of negative edges they contain: if $T_j$ denotes a triangle with $j$ negative edges for $j=0,1,2,3$, then $T_0$ and $T_2$ are balanced, whereas $T_1$ and $T_3$ are unbalanced (see Figure \ref{fig:sH_threshold} (c)). A signed graph is called \emph{balanced} if its vertex set can be partitioned into two subsets such that every positive edge lies within a subset and every negative edge connects vertices across subsets \cite{harary1953notion}. A chordal signed graph such as a complete signed graph is balanced if and only if all the triangles in it are balanced. Empirical evidence shows that real‐world signed networks are typically unbalanced, inspiring various measures to quantify this lack of balance \cite{aref2019balance} \cite{singh2017measuring} \cite{diaz2025mathematical}. 

Harary et al. \cite{harary2002signed} introduced the notion of balance signed graphs for well-structured equities portfolios that could contain risk in the portfolio. In their model, assets are considered as vertices, and the existence of positive and negative edges in the corresponding signed graph is defined by the correlation between returns of the associated pair of assets. Thus the edges indicate the tendency or manner in which the value of the assets change relative to each other. A positive edge between a pair of assets reflects that the valuation of the assets tend to move in tandem, whereas a negative edge implies that the valuations of the assets move in opposite direction, if one goes up the other goes down. Indeed, the signed graph in their model is obtained from a weighted signed graph with a thresholding function as described in Figure \ref{fig:sH_threshold} (a). Following the idea of Harary et al., a number of articles considered to investigate financial markets through signed graph models and vice versa, for instance see \cite{huffner2010separator}  \cite{aref2019balance} \cite{ehsani2020structure} \cite{figueiredo2014maximum}  \cite{vasanthi2015applications} and the references therein.    Recently, 
in \cite{bartesaghi2025global}, the authors show that the global balance index of financial correlation networks can be
used as a systemic risk measure. We note that, even though weighted correlation networks are considered in several context in the literature, weighted signed network models for financial networks are rare to find \cite{masuda2025introduction}.

Markowitz's original mean-variance model (OMV) model is formulated as 
\begin{eqnarray}
    \mbox{OMV1:} && w^* = \arg\min_{w\in\Delta} w^\dagger \widehat{\Sigma}w \,\, \mbox{s.t.} \mu^\dagger w=\ep \label{eqn:OMV1}\\
    \mbox{or OMV2:} && w^* = \arg\min_{w\in\Delta} -\mu^\dagger w + \gamma w^\dagger\widehat{\Sigma}w, \label{eqn:OMV2}
\end{eqnarray} where $\mu$ denotes the mean vector consists of the means of the asset returns and $\Delta=\left\{w\in\R^N_{\geq 0} : \sum_{i=1}^N w_i =1\right\}$ \cite{lai2022survey}. Thus Markowitz's model recommends formation of portfolio to ensure some level of $\ep$ (also called target return) of portfolio return $\mu^\dagger w$ and minimizing the portfolio variance given by equation (\ref{eqn:OMV1}), and simultaneously maximizing the return and minimizing the portfolio variance with a mixing parameter (also called \textit{risk aversion parameter}) $\gamma\in (0,\, \infty)$ in equation (\ref{eqn:OMV2}). Thus, both the models urge to gain more return and withstand less risk.

Now we describe weighted signed graph models for representing financial markets using correlation matrices.  
Since the actual correlation between the returns is unobserved, the correlation is often estimated by employing several statistical estimators \cite{mantegna1999introduction}. Denoting the unobserved covariance matrix as $\Sigma$ for a random vector $R=(R_1,\hdots,R_N),$ we denote an estimator of $\Sigma$ as $\widehat{\Sigma}=[\widehat{\Sigma}_{ij}],$ where $\widehat{\Sigma}_{ij}=\mbox{Cov}(R_i,R_j)=E[(R_i-\mu_{R_i})(R_j-\mu_{R_j})]$ denotes the estimated covariance corresponding to the random variables $R_i$ and $R_j.$ Here, $\mu_{R}=\mathbb{E}[R]$, the expected value of the random variable $R.$ In financial time-series data, let $R_i(t)$ denote the random variable corresponding to an index associated with the asset $i$ at time $t$ (for example, a day or month or year). Then a popular unbiased estimator for $\Sigma$ is the sample covariance matrix, whose entries are defined by $\widehat{\Sigma}_{ij}=\frac{1}{T-1}\sum_{t=1}^T (r_i^t-\mu_{R_i})(r_j^t-\mu_{R_j}),$ where $R_i(t)=r_i^t$ and $R_j(t)=r_j^t,$ $\mu_R=\frac{1}{T}\sum_{t=1}^T r^t,$ and $t\in\{1,\hdots, T\}$ with $T$ is the total time window. The sample correlation coefficient matrix is then defined as $\widehat{\rho}=[\widehat{\rho}_{ij}],$  with $\rho_{ij}=\mbox{Cov}(R_i,R_j)/\sqrt{\mbox{Var}(R_i)\mbox{Var}(R_j)},$ where $\mbox{Var}(X)=\frac{1}{T-1}\sum_{t=1}^T (x^t-\mu_X)^2$ is nonzero, and $\widehat{\rho}$ estimates the  population Pearson correlation matrix. Note that $-1\leq \widehat{\rho}_{ij}\leq 1$ with $\widehat{\rho}_{ij}=1$ if $i=j.$ If $\widehat{\rho}_{ij}>0$ then the random variables $X_i$ and $X_j$ are said to be positively correlated and they are negatively correlated if $\widehat{\rho}_{ij}<0.$

For financial time-series data, such as in stock market, let $S_n(t)$ denote the random variable for the price of the $n$-th stock at time $t.$ Then the random variable $R_n(t)$ which represents return of the $n$-th stock for a fixed time horizon $\Delta t$ is defined as: 
$(S_n(t+\Delta t)-S_n(t))/S_n(t)$ (Linear return) or $\log S_n(t+\Delta t) - \log S_n(t)$ (log return). Often the value of $\Delta t$ is considered as $1$.  For Markowitz's portfolio theory applications, a correlation coefficient estimator matrix must be non-singular, and hence positive definite. We mention here that there are other powerful methods to model the return time-series, such as the GARCH process introduced by Bollerslev \cite{bollerslev1986generalized}, a generalization of the ARCH process proposed by Engle in \cite{engle1982autoregressive}.

\section{Signed network models for local dynamics of financial markets}\label{Sec:3}

In this section, we introduce exploring a time-series of signed networks to capture the local dynamics of asset returns in a financial market, which is the building block of our proposal of dimensionality reduction of portfolio optimization. First, we establish that negative edges in the standard weighted signed graph $G^s({\widehat{\Sigma}_D})$ acts as hedges in a portfolio. Here a weighted signed graph $G^s({\widehat{\Sigma}_D})$ represents a model financial market associated with a (denoised) correlation estimator matrix $\widehat{\Sigma}_D=[\widehat{\Sigma}_{ij}^{D}]$ as follows. 


\begin{definition}\label{def:wsg}(Weighted signed graph models of financial markets)
The vertex set of $G^s({\widehat{\Sigma}_D})$ is the set of assets in a portfolio index by $1,2,\hdots, N.$ Then the edge set $E\subseteq V\times V$ is defined by the two following ways.    
\begin{enumerate}
    \item Without thresholding: there is an edge between a pair of vertices $(i,j)$ if and only if $\widehat{\Sigma}_{ij}^D\neq 0.$ The sign of an edge $(i,j)$ is positive if $\widehat{\Sigma}_{ij}^D> 0$ and negative if $\widehat{\Sigma}_{ij}^D< 0.$ The weight of the edge is $\widehat{\Sigma}_{ij}^D.$ 
    \item With thresholding: let $0<\tau_+<1$ and $-1<\tau_-<0.$ Then there is a positive edge for the vertex pair $(i,j)$ with weight $\widehat{\Sigma}_{ij}^D$ if $\widehat{\Sigma}_{ij}^D>\tau_+$ and a negative edge for the vertex pair $(i,j)$ with weight $\widehat{\Sigma}_{ij}^D$ if $\widehat{\Sigma}_{ij}^D<\tau_-.$
\end{enumerate}
\end{definition}

A signed graph representation of a financial market is the underlying signed graph obtained by relaxing the edge weights of a weighted signed portfolio graph. This can be achieved in two ways: directly from the estimated correlation matrix with thresholding and from the denoised correlation matrix. In both cases, the threshold function may or may not be applied.  In \cite{harary2002signed}, Harary et al. considered using a threshold function directly from the estimated correlation matrix as described in Figure \ref{fig:sH_threshold} (a). As they explained, the edges in the normalized market graph represent the tendency of the return values of the associated assets (vertices). 

We now establish, from a risk-containment perspective, that negative edges in a signed graph representation play a crucial role in reducing portfolio risk compared to portfolios composed solely of positively correlated assets. We adopt portfolio variance as the measure of risk, following Markowitz’s Portfolio Theory (MPT) \cite{markowitz1952portfolio}. 
According to MPT, the objective of a diversified investor is to minimize portfolio variance. The minimum-variance portfolio problem can be formulated as
\[
\min_{w} \; w^\dagger \widehat{\Sigma} \, w
\quad \text{subject to} \quad
\mathbf{1}^\dagger w = 1,
\]
where $\widehat{\Sigma}$ denotes the estimated covariance matrix of asset returns, $w = [w_1, \ldots, w_N]^T$ is the portfolio weight vector with $w_j \ge 0$ representing the fraction of capital invested in asset $j$, and $\mathbf{1}$ is the all-one vector of dimension $N$, the total number of assets. The constraint $w_j \ge 0$ enforces the no-short-selling condition. We now state the following theorem, whose proof is provided in \cite{adhikari2025signed}.

\begin{theorem}\label{thm:var}
Let $w=[w_1,\hdots,w_N]^\dagger$ with $w_i\geq 0$ and $\sum_{i=1}^N w_i=1.$ Suppose $G^s(\widehat{\Sigma})$ is the underlying (weighted) signed graph with at least one negative edge. Then $w^\dagger \widehat{\Sigma}w\leq w^\dagger |\widehat{\Sigma}|w,$ where  $|\widehat{\Sigma}|=[|\widehat{\Sigma}_{ij}|].$
\end{theorem}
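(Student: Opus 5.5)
The plan is to expand both quadratic forms entrywise and compare them term by term, using that the weights are nonnegative. Write
\[
w^\dagger |\widehat{\Sigma}|\, w - w^\dagger \widehat{\Sigma}\, w = \sum_{i,j=1}^N w_i w_j \bigl(|\widehat{\Sigma}_{ij}| - \widehat{\Sigma}_{ij}\bigr).
\]
For every pair $(i,j)$ we have $|\widehat{\Sigma}_{ij}| - \widehat{\Sigma}_{ij} \ge 0$. More precisely, this difference equals $0$ when $\widehat{\Sigma}_{ij}\ge 0$ and equals $2|\widehat{\Sigma}_{ij}|$ when $\widehat{\Sigma}_{ij}<0$. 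The diagonal terms vanish, since variances are nonnegative. Since $w_i w_j\ge 0$ under the no-short-selling constraint, each summand is nonnegative, and hence the sum is nonnegative. This gives the claimed inequality.

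The key steps are as follows. First, record that $w_i\ge 0$ implies $w_iw_j\ge 0$; this is the only place the constraint $w\in\Delta$ is used, and the normalization $\sum_i w_i=1$ plays no role. Second, split the sum over pairs according to the sign of the corresponding edge in $G^s(\widehat{\Sigma})$. Positive edges and non-edges, where $\widehat{\Sigma}_{ij}\ge 0$, contribute zero. Negative edges contribute
\[
2\sum_{(i,j)\ \text{negative},\ i<j} 2\, w_i w_j |\widehat{\Sigma}_{ij}|,
\]
where the extra factor accounts for the symmetric pair $(j,i)$. This yields the identity
\[
w^\dagger|\widehat{\Sigma}|w = w^\dagger\widehat{\Sigma}w + 4\sum_{i<j,\ \widehat{\Sigma}_{ij}<0} w_iw_j|\widehat{\Sigma}_{ij}|,
\]
which makes the hedging interpretation of negative edges explicit.

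I expect no real obstacle here; the only subtle point is the role of the hypothesis ``at least one negative edge''. The non-strict inequality holds even without it, and in that case it is an equality. With a negative edge $(i,j)$, the inequality becomes strict exactly when some negative edge has both endpoints carrying positive weight, that is, when $w_iw_j>0$. I would state this refinement as a remark rather than claim strictness in general, since a $w$ supported away from every negative edge gives equality.
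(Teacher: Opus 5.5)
Your proof is correct. The identity
$w^\dagger|\widehat{\Sigma}|w - w^\dagger\widehat{\Sigma}w = 4\sum_{i<j,\ \widehat{\Sigma}_{ij}<0} w_iw_j|\widehat{\Sigma}_{ij}| \ge 0$
under $w\ge 0$ is exactly the term-by-term comparison the statement needs, and it is the natural argument. This paper does not actually reproduce a proof of the theorem; it defers to \cite{adhikari2025signed}.

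Your remark that the negative-edge hypothesis only matters for strictness is a correct sharpening: strict inequality holds precisely when some pair with $\widehat{\Sigma}_{ij}<0$ has $w_iw_j>0$. One small fix: state that condition via the sign of $\widehat{\Sigma}_{ij}$ rather than via edges. Under the thresholded graph of Definition~\ref{def:wsg}, a non-edge can still carry a small negative entry, so your phrase ``non-edges, where $\widehat{\Sigma}_{ij}\ge 0$'' is accurate only for the unthresholded graph.
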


Theorem \ref{thm:var} affirms that negative edges act like hedges in a portfolio, as defined in \cite{baur2010gold}. Now note that the sample covariance of return values of a pair of assets is given by $\widehat{\Sigma}_{ij}=\frac{1}{T-1}\sum_{t=1}^T (R_i^t-\mu_{R_i})(R_j^t-\mu_{R_j})$ for a time period $T$, where $R_k^t$ denotes the return of asset $k$ at time $t,$ and $\mu_{R_k}$ is the mean of the return values of the asset $k$ for the time period $T.$ If $\widehat{\Sigma}_{ij}<0,$ it indicates that one of the assets had a few `bad days' compare to its own mean return value than the other asset in terms of their  return values, although for the other days their return values could be at per compare to their own mean return values. Whereas, if $\widehat{\Sigma}_{ij}>0$ then it would mean that they have the same `bad days' and `good days' i.e. return values of both the assets go up or down together corresponding to their own mean return values in most of the days or the values go up or down quite deep together on a few days compare to the days when pairwise go in opposite directions making a pair (up,down) or (down,up). In an extreme case, one ``very good" or ``very bad" day of either or both the assets can flip the sign of $\widehat{\Sigma}_{ij}$ from positive to negative or vice-versa. By compressing these finer co‐movement patterns into $\widehat{\Sigma}$, the Markowitz mean-variance formulation masks this local return dynamics. This interpretation applies equally to raw and denoised (or thresholded) covariance estimators; henceforth, ``covariance matrix" refers to either form.

To capture the local dynamics of asset returns in a financial market, we model the market as a time-indexed sequence of complete signed graphs, defined as follows.

\begin{definition}\label{def:sg}(Time-series of signed graphs)
The signed graph $G^s_t(\boldsymbol{\mu,R_N})=(V, E_t)$ of $N$ assets at a time $t\in\{1,\hdots,T\}$ with $V=\{1,\hdots,N\}$ as the set of assets, $\mu$ is the mean return vector of the assets and $\boldsymbol{R_N}=(R_1^t,\hdots,R_N^t)$ is the observed empirical return values. For a pair of assets $(i,j)$ there is a positive edge if $(R_i^t-\mu_{R_i})(R_j^t-\mu_{R_j})\geq 0$ and a negative edge if $(R_i^t-\mu_{R_i})(R_j^t-\mu_{R_j})< 0.$    
\end{definition}

Figure~\ref{fig:sH_threshold} (b) presents the symbolic representation of the positive and negative edge assignments in our proposed signed graph model, while Figure~\ref{fig:sH_threshold} (a) depicts the corresponding edge assignment framework of Harary \emph{et al.} Then we have the following theorem.

\begin{theorem}
 The signed graph $G^s_t(\boldsymbol{\mu,R_N})$ is balanced for every $t.$   
\end{theorem}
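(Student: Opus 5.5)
The plan is to exhibit an explicit bipartition of $V$ that witnesses balance in the sense of Harary recalled in Section~\ref{Sec:2}. Fix $t$ and write $d_i = R_i^t-\mu_{R_i}$ for the deviation of asset $i$ from its mean return. Take
\[
V_+ = \{ i\in V : d_i \geq 0\}, \qquad V_- = \{ i \in V : d_i < 0\}.
\]
We then check that every positive edge of $G^s_t(\boldsymbol{\mu,R_N})$ lies inside one of these sets and every negative edge joins them. Either set may be empty; the definition of balance still applies, since an all-positive complete graph is trivially balanced.

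\emph{Negative edges cross the partition.} If $(i,j)$ is negative, then $d_id_j<0$. Hence both deviations are nonzero and of opposite signs, so one endpoint lies in $V_+$ and the other in $V_-$.

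\emph{Positive edges stay inside a part.} If $(i,j)$ is positive, then $d_id_j\geq 0$. When both $d_i,d_j$ are nonzero they have the same sign, so $i$ and $j$ lie in the same part. When, say, $d_i=0$, vertex $i$ lies in $V_+$ by our convention. If $d_j<0$, the pair would be a positive edge across the partition. This zero-deviation case is the only real obstacle.

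To handle it, I would first try to redefine the part of each zero-deviation vertex. Suppose there exist $i$ with $d_i=0$, $j$ with $d_j>0$, and $k$ with $d_k<0$. Then all edges at $i$ are positive while $(j,k)$ is negative, so the triangle $\{i,j,k\}$ is of type $T_1$ and unbalanced. No partition can fix this, so the statement needs a mild genericity assumption: no $R_i^t$ equals $\mu_{R_i}$ exactly. This holds almost surely for continuous return data. Alternatively, it suffices that all nonzero deviations at time $t$ share a sign, in which case the zero-deviation vertices are put in that common part. I would state the proof under this assumption, noting that it holds generically.

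Under that assumption, the bipartition argument above gives balance directly. One can also cross-check via the triangle criterion for complete signed graphs from Section~\ref{Sec:2}: the sign of edge $(i,j)$ is $\sign(d_i)\sign(d_j)$, so the product of signs around any triangle is $\sign(d_i)^2\sign(d_j)^2\sign(d_k)^2=+1$. Every triangle therefore has an even number of negative edges and is of type $T_0$ or $T_2$.
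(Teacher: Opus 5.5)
Your argument is correct, and it reaches the result by a different route from the paper's. The paper uses the triangle criterion: a complete, hence chordal, signed graph is balanced if and only if all its triangles are balanced. It then rules out $T_1$ and $T_3$ triangles by contradiction, splitting on the sign of $R_i^t-\mu_i$.

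You instead exhibit Harary's bipartition directly: the assets above versus below their window mean on day $t$. This is constructive and does not need the chordality fact, which the paper quotes without proof. It also names the two factions explicitly, which is the economically meaningful reading of the graph. Your sign-product identity $\sign(d_i)^2\sign(d_j)^2\sign(d_k)^2=1$ replaces the paper's two contradiction cases with one line. The paper's route has the merit of working in the language of triangle types $T_0,\dots,T_3$, which it reuses in the skewness discussion.

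Your zero-deviation observation is a real flaw in the statement, not a technicality you introduced. Take $d_i=0$, $d_j>0$, $d_k<0$. The edges $(i,j)$ and $(i,k)$ are positive, because the edge rule assigns a positive sign whenever the product is $\geq 0$. The edge $(j,k)$ is negative, so $\{i,j,k\}$ is a $T_1$ triangle and the theorem as literally written fails.

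The paper's proof misses this. In the $T_1$ case it records a positive edge by the strict inequality $(R_i^t-\mu_i)(R_j^t-\mu_j)>0$ and then deduces $R_j^t-\mu_j>0$, which is exactly the genericity assumption you make explicit. Your condition is: no zero deviations at time $t$, or all nonzero deviations share a common sign. That condition is in fact both necessary and sufficient, so what you prove is the sharp correct version of the theorem.
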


\pf Since $G^s_t(\boldsymbol{\mu,R_N})$ is a complete graph, it is chordal, and hence it is balanced if every triangle subgraph is balanced. We prove the statement by method of contradiction. If possible let there be a $T_1$ type triangle $(i,j,k)$ formed by the vertices $i,j,k$ such that the edge $(i,k)$ is negative, and the edges $(i,j)$ and $(j,k)$ are positive. Consequently, $(R_i^t-\mu_i)(R_k^t-\mu_k)<0$, $(R_i^t-\mu_i)(R_j^t-\mu_j)>0$, and $(R_j^t-\mu_j)(R_k^t-\mu_k)<0.$ Now if $(R_i^t-\mu_i)>0$ then $(R_k^t-\mu_k)<0$ and $(R_j^t-\mu_j)>0$ follows from the first two signs, which contradicts the fact that $(R_j^t-\mu_i)(R_k^t-\mu_k)<0.$ Similar arguments also hold true if $(R_i^t-\mu_i)<0.$ Thus a triangle of type $T_1$ can not exist in $G^s_t(\boldsymbol{\mu,R_N})$. Next, suppose there is a $T_3$ type triangle on three vertices $i,j,k$ in $G^s_t(\boldsymbol{\mu,R_N})$  such that the edges $(i,k),$ $(i,j)$ and $(j,k)$ are negative. This implies, $(R_i^t-\mu_i)(R_k^t-\mu_k)<0$, $(R_i^t-\mu_i)(R_j^t-\mu_j)<0$, and $(R_j^t-\mu_j)(R_k^t-\mu_k)<0.$ Now, if $(R_i^t-\mu_i)>0$ then $(R_j^t-\mu_j)>0$ and $(R_k^t-\mu_k)>0$ which contradicts $(R_j^t-\mu_j)(R_k^t-\mu_k)<0.$ A similar conclusion is also true if $(R_i^t-\mu_i)<0.$ This concludes the proof. \hfill{$\square$}

\begin{figure}[htbp]
    \centering
    \subfigure[]{\includegraphics[width=0.25\textwidth]{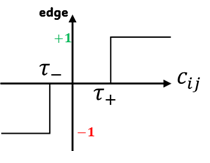}}
    ~ ~ ~
    \subfigure[]{\includegraphics[width=0.20\textwidth]{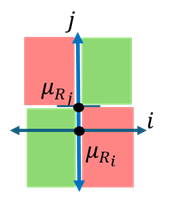}} 
     ~ ~ ~
    \subfigure[]{\includegraphics[width=0.40\textwidth]{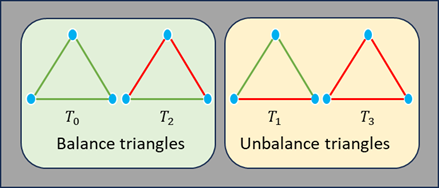}} 
    \caption{(a) Threshold function \cite{harary2002signed} for signed network formation. $c_{ij}$ denotes the covariance or correlation strength for the assets $i$ and $j$, (b) The formation of positive and negative edges in $G^s_t(\boldsymbol{\mu,R_N})$, $(c)$ Signed triangles in  signed graphs, green and red colored edges represent the positive and negative edges respectively.}
    \label{fig:sH_threshold}
\end{figure}

\subsection{Hedge score of assets}

In the weighted graph representation of a portfolio, we observe that a negative edge helps to reduce portfolio risk. As proved in Theorem \ref{thm:var}, for any invest allocation vector, the risk can be contained more by having negative edges (negatively correlated assets) than positively correlated edges (positively correlated assets) of equal strengths. Observing this, we define \textit{hedge score} of an asset in a financial market based on our formulation of $G^s_t(\boldsymbol{\mu,R_N}).$

\begin{definition}\label{def:hs}(Hedge score)
Let $S^t_{n}:V\setminus\{n\}\rightarrow \{0,1\}$ be a function $S^t_n(j)=1$ if $(R_n^t-\mu_{R_n})(R_j^t-\mu_{R_j})<0$ and $S^t_n(j)=0$ otherwise, where $t\in\{1,\hdots,T\}$. Then the hedge score of an asset $n$ is defined as
\begin{equation}\label{eqn:hs}
h(n,T)= \frac{\sum_{\substack{j\in V\\ j\neq n}} \sum_{t=1}^{T} S_n^t(j)}{T(N-1)}. 
\end{equation}
\end{definition}

Note that $S_n^t$ counts the negative degree of the vertex $n$ in the graph $G^s_t(\mu,R_N).$ Besides, $0\leq h(n,T)\leq 1.$

\subsection{Higher moments}

In this section, we incorporate higher-order moments of asset return distributions into our signed-network framework and develop a corresponding combinatorial optimization formulation. In standard higher-order portfolio optimization models, the third and fourth central moments, namely skewness and kurtosis, are included in the objective function. Recall that skewness and kurtosis of a portfolio with weight vector $w$ are defined as
\[
S(w) = \mathbb{E}\!\left[(w^\dagger (R-\mu))^3\right]
\quad \text{and} \quad
K(w) = \mathbb{E}\!\left[(w^\dagger (R-\mu))^4\right],
\]
respectively. Expanding these expressions yields
\begin{eqnarray}
S(w) &=& \sum_{i=1}^N \sum_{j=1}^N \sum_{k=1}^N 
w_i w_j w_k \, \mathbb{E}\!\left[(R_i-\mu_i)(R_j-\mu_j)(R_k-\mu_k)\right], \label{eqn:skew}\\
K(w) &=& \sum_{i=1}^N \sum_{j=1}^N \sum_{k=1}^N \sum_{l=1}^N 
w_i w_j w_k w_l \, \mathbb{E}\!\left[(R_i-\mu_i)(R_j-\mu_j)(R_k-\mu_k)(R_l-\mu_l)\right].
\label{eqn:kurt}
\end{eqnarray}

Now to explore capturing the formulation of portfolio optimization including higher moments through our network formulation that aims to selection of assets  that maximize $S(\boldsymbol{w})$ and minimize $S(\boldsymbol{w}).$ Assuming $w_i\geq 0$ for any asset in the portfolio i.e. without short-selling, we have the following observation. 

We first consider the skewness expression in Eq.~(\ref{eqn:skew}). The objective is to maximize the expected value of 
\[
S_{ijk} := (R_i-\mu_i)(R_j-\mu_j)(R_k-\mu_k)
\]
for any triple of assets $(i,j,k)$ over a time window $T$, where the local time index $t \in \{1,2,\ldots,T\}$ represents a day, month, or year, depending on the sampling resolution. Note that in the algebraic expansion of $S(\boldsymbol{w})$, the indices $i,j,k$ are not required to be distinct. In contrast, in the signed network $G^s_t(\boldsymbol{\mu},R_N)$, we seek a combinatorial interpretation of the quantity $S_{ijk}^t$ at each time step $t$. Recall that $R_a^t$ denotes the return value of asset $a$ at time $t$ within the window $T$. The following cases arise.

\begin{itemize}
\item \textbf{Case I:} Two indices are equal. Without loss of generality, let $i=j$. Then
\[
S_{ijk}^t = (R_i^t-\mu_i)^2 (R_k^t-\mu_k),
\]
whose sign depends entirely on the deviation of the $k$-th asset’s return from its mean over the time window.

\item \textbf{Case II:} All three indices are equal, i.e., $i=j=k$. Then
\[
S_{ijk}^t = (R_i^t-\mu_i)^3,
\]
whose sign again depends solely on the deviation of that asset’s return from its mean.

\item \textbf{Case III:} All three indices are distinct. Then the sign of $S_{ijk}^t$ is positive precisely when the triangle induced by vertices $i,j,k$ in the signed graph $G^s_t(\boldsymbol{\mu},R_N)$ is balanced, i.e., when it is of type $T_0$ or $T_2$.
\end{itemize}

We next consider the kurtosis expression in Eq.~(\ref{eqn:kurt}). Let
\[
K_{ijkl} := (R_i-\mu_i)(R_j-\mu_j)(R_k-\mu_k)(R_l-\mu_l),
\]
and seek a network-based interpretation for minimizing the expected value of $K(\boldsymbol{w})$. As before, the indices $(i,j,k,l)$ need not be distinct. The following cases determine the sign behavior relevant for kurtosis reduction.

\begin{itemize}
\item \textbf{Case I:} Two indices are equal and the other two are distinct; without loss of generality, let $i=j$ and $k \neq l$. Then
\[
K_{ijkl}^t = (R_i^t-\mu_i)^2 (R_k^t-\mu_k)(R_l^t-\mu_l),
\]
which becomes negative and hence reduces kurtosis when the edge between assets $k$ and $l$ carries a negative sign in the signed graph.

\item \textbf{Case II:} All four indices are distinct. In this case, $K_{ijkl}^t$ is negative when the subgraph induced by vertices $i,j,k,l$ i.e., the signed $4$-clique is isomorphic to $K_4^{B_4}$ as described in Figure \ref{fig:bK4}. Indeed, note that $K_{ijkl}^t<0$ if odd number i.e. either $1$ or $3$ of its terms are $<0.$ If only one term is negative, without loss of generality, suppose $(R_i^t-\mu_i)<0$ and all other terms are positive. Then $(R_i^t-\mu_i)(R_j^t-\mu_j)<0$, $(R_i^t-\mu_i)(R_k^t-\mu_k)<0,$ $(R_i^t-\mu_i)(R_l^t-\mu_l)<0,$ $(R_j^t-\mu_k)(R_k^t-\mu_k)>0,$ $(R_j^t-\mu_j)(R_l^t-\mu_l)>0,$ $(R_k^t-\mu_k)(R_l^t-\mu_l)>0,$ which implies a $4$-clique on vertices $i,j,k,l$ is isomorphic to $K_4^{B_2}.$ Next, if $3$ terms are negative, without loss of generality, assume that $(R_l^t-\mu_l)>0$ and all other terms are negative. Then $(R_i^t-\mu_i)(R_j^t-\mu_j)>0$, $(R_i^t-\mu_i)(R_k^t-\mu_k)>0,$ $(R_i^t-\mu_i)(R_l^t-\mu_l)<0,$ $(R_j^t-\mu_j)(R_k^t-\mu_k)>0,$ $(R_j^t-\mu_j)(R_l^t-\mu_l)<0,$ $(R_k^t-\mu_k)(R_l^t-\mu_l)<0,$ and the vertices form a $4$-clique isomorphic to $K_4^{B_2}.$   
\end{itemize}

These observations indicate that a desirable portfolio should favor asset subsets that include balanced triangles in the time series of signed graphs $G^s_t(\boldsymbol{\mu},R_N)$, thereby promoting higher skewness, which is obviously true since $G^s_t(\boldsymbol{\mu},R_N)$ is a balanced complete graph. On the other hand, kurtosis reduction is associated with the presence of negative edges and a large number of balance $4$-cliques of type $K_4^{B_2}.$ Note that $4$-cliques of type $K_4^{B_j},$ $j=1,2,3,4$ can exist in $G^s_t(\boldsymbol{\mu},R_N)$ (see Figure \ref{fig:bK4}) without compromising on higher skewness, however maximizing $K_4^{B_2}$ type $4$-cliques indicate lower value of kurtosis. Since the presence of negative edges is incorporated withing the definition of hedge-score, and   the other combinatorial characterizations could be leveraged to formulate an optimization framework for dimensionality reduction in portfolio optimization incorporating higher-order moments.

\begin{figure}[htbp]
    \centering
    {\includegraphics[width=0.40\textwidth]{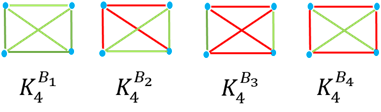}}
    \caption{The possible $4$-cliques in $G^s_t(\boldsymbol{\mu}, R_N)$. Edges in green and red indicate positive and negative edges respectively.}
    \label{fig:bK4}
\end{figure}

\section{Dimensionality reduction of portfolio optimization}\label{Sec:4}

In this section, we develop a dimensionality reduction framework based on the proposed hedge-score formulation derived from Markowitz’s portfolio optimization model, and further extend it by incorporating higher-order moments into the portfolio selection process.

The dimensionality reduction formulation for Markowitz’s optimal mean-variance (OMV) problem is based on the time series of signed graphs $G^s_t(\boldsymbol{\mu}, R_N)$ over a time window $T$, as introduced in equation (\ref{eqn:opts}). The formulation relies on hedge scores of assets, and the objective function is defined by maximizing the expected return weighted by hedge scores, which capture risk-minimization behavior. In our signed network representation, risk containment is reflected through the presence of negative edges in $G^s_t(\boldsymbol{\mu}, R_N)$.

\subsection{Dimensionality reduction based on Markowitz’s OMV model}

Motivated by the OMV problem stated in equation (\ref{eqn:OMV2}), we propose the following optimization problem, which effectively reduces the dimensionality of Markowitz’s portfolio optimization task. Solving this problem yields a subset of assets that simultaneously promotes higher expected return and lower risk under the proposed signed-network formulation.

\begin{equation}\label{eqn:opts}
\text{OPT1:} \quad 
\arg\max_{S \subseteq V} \sum_{n \in S} h(n,T)\,\mu_{R_n}(T)
\;=\;
\arg\max_{S \subseteq V} H_S^\dagger(T)\,\mu_S(T),
\end{equation}
where $\mu_{R_n}(T)$ denotes the mean return of asset $n$ over the time window $T$, and $N = |V|$ is the total number of assets in the market. For a subset of assets $S \subseteq V$, we define
\[
H_S(T) = [\,h(s_1,T), \ldots, h(s_{|S|},T)\,]^\dagger
\quad \text{and} \quad
\mu_S(T) = [\,\mu_{R_{s_1}}(T), \ldots, \mu_{R_{s_{|S|}}}(T)\,]^\dagger
\]
as the column vectors of hedge scores and mean returns, respectively, for assets $s_k \in S$ over the period $T$. The solution set of Eq.~(\ref{eqn:opts}) may be interpreted as a potential hedge-protected diversified asset universe.

The theoretical maximum of Eq.~(\ref{eqn:opts}) would occur when the signed graphs $G^s_t(\boldsymbol{\mu}, R_N)$ are complete with all edges negative for all $t$, although such a configuration is not realizable in practice for moderately large asset sets. Therefore, we impose a cardinality constraint $|S| = K \ll N$ in Eq.~(\ref{eqn:opts}) to obtain a reduced asset universe suitable for subsequent portfolio optimization.

Once the asset subset $S$ is determined by solving OPT1, the investment weight vector $w$ can be computed using standard allocation strategies, such as equal weighting ($1/|S|$) or Markowitz mean–variance optimization restricted to the reduced universe. Algorithm~\ref{Alg:dr_omv} summarizes the procedure for selecting the top $K$ assets that solve OPT1. The computational complexity per time step is $O(N)$, resulting in a total complexity of $O(TN)$ over the window $T$.

\begin{algorithm}
  \caption{Algorithm for OMV dimensionality reduction}
  {\bf Input:} Data of daily returns
  \begin{algorithmic}[1]
    \State Extract daily log returns $R_i^t$ for the time-period $T$
    \State Compute mean returns $\mu_i$
    \State Initialize $d_i^-=0$ for all assets
    \State For each local-time (for example, day) $t:$
    \begin{itemize}
        \item[-] Compute the deviations: $\delta_i^t=R_i^t-\mu_i$
        \item[-] For each asset $i:$ $d_i^- += \#\{j\neq i: \delta_i^t\delta_j^t <0\}$
    \end{itemize}
    \State Compute hedge-scores $h_i=d_i^-/(T(N-1))$
    \State compute $s_i=h_i\mu_i$
    \State Return top $K$ assets by $s_i$
  \end{algorithmic}
  \label{Alg:dr_omv}
\end{algorithm}

\subsubsection{Practical implementation of Algorithm \ref{Alg:dr_omv}}\label{Sec:K}
Now we discuss how to choose the parameters for OMV dimension reduction.

The parameter $T$ denotes the length of the training window used to construct the signed network representation of the market. In our empirical implementation in Section \ref{Sec:5}, $T$ corresponds to the number of trading days within a fixed calendar year, ensuring that the estimation of asset-specific statistics.  Larger values of $T$ provide more stable estimates of mean returns and co-movement patterns, while smaller windows allow the model to adapt more quickly to regime shifts. In practice, the use of a rolling yearly window offers a tractable compromise and aligns with standard conventions in empirical asset pricing.

For each asset $i$, we compute the hedge score $h_i$ as a measure of persistent negative co-movement with the rest of the market. Specifically, let $R_i^t$ denote the daily log return of asset $i$ on day $t$, and let $\mu_i$ denote its mean return over the training window. For each day $t$, we construct a complete signed graph by assigning a positive edge between assets $i$ and $j$ if $(R_i^t - \mu_i)(R_j^t - \mu_j) \geq 0$, and a negative edge otherwise. The negative degree of asset $i$ on day $t$ is defined as the number of assets with which it has a negative edge. The hedge score is then given by
\[
h_i = \frac{1}{T(N-1)} \sum_{t=1}^T d_i^{-}(t),
\]
where $d_i^{-}(t)$ denotes the negative degree of asset $i$ on day $t$. This normalization ensures comparability across assets and time periods. Intuitively, $h_i$ captures the extent to which an asset systematically moves in opposition to others, and thus serves as a proxy for its hedging potential.

Next, we consider choices for the value of $K$ for dimension reduction. There could be multiple possible ways to set the value of $K.$ To complement heuristic choices, the parameter $K$ can be selected using quantitative rules based on both score distributions and network structure. Let $\{s_{(1)} \geq s_{(2)} \geq \cdots \geq s_{(N)}\}$ denote the ordered hedge-adjusted scores $s_i = h_i \mu_i$. A score-based rule selects
\[
K = \max \left\{ k : \Delta s_{(k)} = s_{(k)} - s_{(k+1)} \geq \tau \right\},
\]
for a threshold $\tau > 0$, capturing the ``elbow'' point beyond which marginal gains diminish. Alternatively, a percentile rule sets
\[
K = \left\lceil \alpha N \right\rceil, \quad \alpha \in (0,1),
\]
retaining a fixed fraction of top-ranked assets.These rules provide systematic and interpretable mechanisms for determining $K$, balancing statistical significance and portfolio performance. Among all the choices, an investor's input for an upper bound on the size of a portfolio may also be incorporated. In our numerical simulation, we show how different heuristic choices of $K$ impact the performance of the portfolio in out-of-sample data in Section \ref{Sec:5}.

\subsection{Dimensionality reduction based on higher moments}

We next propose a dimensionality reduction framework for higher-moment portfolio optimization based on the time series of signed graphs $G^s_t(\boldsymbol{\mu}, R_N)$. 

Motivated by the observations in Section~\ref{Sec:3}, we formulate the following optimization problem. Since our primary objective is dimensionality reduction, we seek a subset of assets of size $\leq K \ll N$ that preserves key combinatorial structures, namely, a large number of balanced triangles and balanced $4$-cliques of type $K_4^{B_2}$ within the induced signed subgraphs. We therefore propose the following objective:
\begin{equation}\label{eqn:opts2}
\text{OPT2:} \quad
\arg\max_{\substack{S \subseteq V \\ |S|\leq K}}
\left[\; H_S^\dagger(T)\,\mu_S(T)
+  \sum_{t=1}^T \dfrac{\left|\mathcal{K}_4^{t,S,B_2}\right|}{{|S|\choose 4}}\right],
\end{equation}
where $\mathcal{K}_4^{t,S,B_2}$ is the set of all $4$-cliques isomorphic to $K_4^{B_2}$ induced by vertices in $S$ for the local time $t$ and ${|S|\choose 4}$ is the total number of cliques in a complete graph on $|S|$ vertices, respectively. The choice of $K$ can be determined either through data-driven procedures or based on investor preferences, as discussed in Section~\ref{Sec:K}.

This formulation introduces, to the best of our knowledge, new combinatorial optimization problem centered on maximizing density of $4$-cliques of specific signed configuration in signed graphs, particularly complete signed graphs. The closest related problems in the unsigned setting include the $k$-clique-densest-subgraph problem, which attracted significant attention in graph mining due to their wide range of applications \cite{tsourakakis2015k}. In contrast, motif-density problems in signed graphs remain largely unexplored.

Now we discuss the hardness of finding a subset $S$ of vertices with $|S|=K$ of a complete signed graph on $N$ vertices that maximizes the density of $4$-cliques of type $K_4^{B_2}$. First we formalize the signed edge configuration of the $4$-clique $K_4^{B_2}.$

\begin{definition}
Let $G^s=(V,\sigma)$ be a complete signed graph with sign function 
$\sigma : V \times V \to \{+1,-1\}$. 
A set of four vertices $\{i,j,k,l\} \subseteq V$ induces a balanced signed clique of type $K_4^{B_2}$ if there exists a vertex, say $i$, such that $\sigma(i,j)=\sigma(i,k)=\sigma(i,l)=-1$,
and $\sigma(j,k)=\sigma(j,l)=\sigma(k,l)=+1.$
Equivalently, a $K_4^{B_2}$ consists of a positive triangle together with a fourth vertex that is negatively connected to all vertices of that triangle.
\end{definition}

Now we recall the CLIQUE problem \cite{karp2009reducibility}.

\begin{definition}[CLIQUE problem]
Given an undirected graph $H=(V_H,E_H)$ and an integer $k$, the CLIQUE problem asks whether there exists a subset $C \subseteq V_H$ such that $|C| = k$ and
$(u,v) \in E_H \quad \forall u,v \in C, \ u \neq v.$ Such a subset is called a clique of size $k$.
\end{definition}

In the following theorem we show that finding a subset $S$ of size $K<N$ that maximizes $K_4^{B_2}$ in a complete signed graph on $N$ vertices is NP hard.

\begin{theorem}
Let $G^s=(V,\sigma)$ be a complete signed graph on $N$ vertices. 
The optimization problem of selecting a subset $S \subseteq V$ of size $K$ that maximizes the number of induced $K_4^{B_2}$ patterns is NP-hard.
\end{theorem}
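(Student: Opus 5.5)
The plan is a polynomial reduction from the decision version of CLIQUE (Karp \cite{karp2009reducibility}) to the decision version of the stated problem: given $(G^s,K,M)$, is there $S\subseteq V$ with $|S|=K$ inducing at least $M$ copies of $K_4^{B_2}$? From an instance $(H,k)$, with $H=(V_H,E_H)$ and $n=|V_H|$, build a complete signed graph $G^s$ as follows.

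\textbf{Construction.} The vertex set is $V_H\cup Z$, where $Z=\{z_1,\dots,z_t\}$ is a set of ``hub'' vertices and $t$ is polynomial in $n$, to be fixed below. For $u,v\in V_H$ put $\sigma(u,v)=+1$ if $(u,v)\in E_H$ and $\sigma(u,v)=-1$ otherwise. Every hub--original edge $\sigma(z_a,v)$ is negative. Hub--hub edges are also negative, so that no pattern arises containing two or more hubs.

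\textbf{Local classification.} Using the definition of $K_4^{B_2}$ (a positive triangle plus a fourth vertex negative to all three), I would check the $4$-sets by the number of hubs they contain.
\begin{itemize}
\item Three or four hubs: every edge among hubs and from hubs to originals is negative, so there is no positive triangle and no pattern.
\item Exactly two hubs: every vertex has at least two negative edges, and the only remaining edge is the one between the two originals. A $K_4^{B_2}$ needs three positive edges, so there is no pattern.
\item Exactly one hub $z$ with originals $u,v,w$: this is a $K_4^{B_2}$ (with $z$ as apex) if and only if $\{u,v,w\}$ is a triangle of $H$. A different apex is impossible, since the triangle opposite it would contain $z$ and hence a negative edge.
\end{itemize}
Hence for $S$ containing $j$ hubs and an original set $C$,
\[
\#K_4^{B_2}(S)=j\,\tau_H(C)+q_H(C),
\]
where $\tau_H(C)$ counts the triangles of $H[C]$ and $q_H(C)$ counts the $4$-subsets of $C$ that form a $K_4^{B_2}$ under the original signing.

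\textbf{Parameters.} Take $K=k+t$ and $M=t\binom{k}{3}$, with $t>\binom{k}{4}$.
\begin{itemize}
\item If $H$ has a $k$-clique $C$, then $S=C\cup Z$ yields at least $t\binom k3=M$ patterns.
\item Suppose $H$ has no $k$-clique and $S$ contains all hubs. Then $|C|=k$, so $\tau_H(C)\le\binom k3-1$, and
\[
\#K_4^{B_2}(S)\le t\Bigl(\tbinom k3-1\Bigr)+\tbinom k4<M .
\]
\end{itemize}

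\textbf{Main obstacle.} The remaining case is $S$ containing only $j<t$ hubs, so that $|C|=k+t-j>k$. A larger original set can contain many triangles, and $q_H$ can also grow, without any $k$-clique. I would first try to make hubs strictly more valuable than originals, so that every optimal $S$ includes all of $Z$. The tool is an exchange argument: replacing an original $v\in C$ by an unused hub changes the count by $\tau_H(C\setminus\{v\})$ minus the at most $\binom{|C|-1}{2}+\binom{|C|-1}{3}$ patterns through $v$, and this is not automatically nonnegative.

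If the exchange fails, I would modify the gadget. One option is to cap $K$ by taking $t$ polynomially large relative to $n$. Another is to preprocess $H$, for instance by taking a blow-up or a complement, so that $q_H\equiv 0$ and $\tau_H(C)$ on $m>k$ vertices is controlled by a Tur\'an-type bound. Establishing a clean separation threshold for this case is where I expect the real work to lie.
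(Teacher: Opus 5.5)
\textbf{There is a genuine gap: the case $j<t$ that you leave open is false with your parameters.} Your local classification, the count $j\,\tau_H(C)+q_H(C)$, and the all-hubs case are correct. But no exchange argument can close the remaining case.

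Take $k\ge 5$ and let $H$ be $K_{k+1}$ with two disjoint edges deleted. Then $\omega(H)=k-1$, and $q_H\equiv 0$ because every vertex has at most one non-neighbour. Also $\tau_H(V_H)=\binom{k+1}{3}-2(k-1)$. The set $S=V_H\cup\{z_1,\dots,z_{t-1}\}$ has size $k+t=K$ and contains $(t-1)\bigl(\binom{k+1}{3}-2(k-1)\bigr)$ patterns. This is at least $M=t\binom{k}{3}$ as soon as $t\ge \tfrac{(k+4)(k-3)}{3(k-4)}$, a bound of order $k/3$. Your requirement $t>\binom{k}{4}$, which you need to absorb $q_H$ when all hubs are used, always meets this bound.

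So the two demands on $t$ pull in opposite directions. The benefit of replacing one hub by an original vertex is multiplied by the number of remaining hubs, and a $K_k$-free set on more than $k$ vertices can have far more than $\binom{k}{3}$ triangles.

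Your proposed remedies do not help.
\begin{itemize}
\item Taking $t$ large relative to $n$ makes things worse. The set $V_H$ plus $t+k-n$ hubs gives $(t+k-n)\tau_H(V_H)\ge t\binom{k}{3}$ for large $t$ whenever $H$ is $K_k$-free with more than $\binom{k}{3}$ triangles, for example a Tur\'an graph.
\item Preprocessing so that $q_H\equiv 0$ cannot preserve hardness. If $q_H\equiv 0$, the complement of $H$ is claw-free, and maximum independent set there, hence maximum clique of $H$, is solvable in polynomial time (Minty's algorithm). The example above already has $q_H\equiv 0$ anyway.
\end{itemize}

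\textbf{The paper's route has the same hole.} The paper uses exactly your gadget with a single hub ($t=1$, $K=c+1$, threshold $\binom{c}{3}$). It simply asserts the converse: that $\binom{c}{3}$ patterns force $c$ vertices of $V_H$ spanning $\binom{c}{3}$ positive triangles. That step ignores the term $q_H$ you isolated. For $c\ge 6$, let $H$ consist of a $K_{c-1}$ and isolated vertices, and take $S=V(K_{c-1})\cup\{x,l\}$ with $x$ isolated. This $S$ carries $2\binom{c-1}{3}\ge\binom{c}{3}$ patterns, yet $H$ has no $c$-clique. So $t=1$ is no fallback.

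\textbf{The missing idea is to amplify the original side rather than the hub side.} Keep one hub $l$, negative to every vertex. Add a set $Q$ of $s=\binom{n}{4}+1$ new vertices, with every edge inside $Q$ and every edge between $Q$ and $V_H$ positive; that is, join $H$ with a clique. Set $K=k+s+1$ and $M=\binom{k+s}{3}$.
\begin{itemize}
\item A pattern avoiding $l$ is an induced $K_3\cup K_1$ in the positive graph. It cannot use a vertex of $Q$, since $Q$ is positive to everything. So there are at most $\binom{n}{4}<M$ such patterns, and any $S$ reaching $M$ must contain $l$.
\item If $l\in S$, the count is $\tau(C)+q(C)$ with $|C|=k+s$.
\item If $H$ has a $k$-clique $C_0$, then $C=C_0\cup Q$ gives exactly $M$ patterns.
\item If $\omega(H)<k$, then $C$ is not a positive clique. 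A non-adjacent pair in $C$ excludes the $k+s-2>\binom{n}{4}$ triples containing it. Hence $\tau(C)+q(C)\le\binom{k+s}{3}-(k+s-2)+\binom{n}{4}<M$.
\end{itemize}
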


\begin{proof}
We prove NP-hardness by reduction from the CLIQUE problem.

Let $H=(V_H,E_H)$ be an instance of CLIQUE with $|V_H|=n$. Construct a complete signed graph $G=(V,\sigma)$ as follows: $V = V_H \cup \{l\}$, where $l$ is a new vertex. Then define the sign function $\sigma$ as follows: $\sigma(u,v)=+1$ if $(u,v) \in E_H,$ and $\sigma(u,v)=-1$ otherwise, for all $u,v \in V.$

Now set the subset size $K = c+1$ for some positive integer $c<n$ and set $T = \binom{c}{3}.$
First, suppose $H$ contains a clique $C \subseteq V_H$ with $|C|=c<n$. 
Consider the subset $S = C \cup \{l\}.$

Then every triple $\{i,j,k\} \subseteq C$ forms a positive triangle, and since $l$ is negatively connected to all vertices of $C$, each set $\{i,j,k,l\}$ forms a $K_4^{B_2}$. Thus $S$ contains exactly $T$ copies of $K_4^{B_2}$.
Conversely, suppose there exists a subset $S \subseteq V$ of size $K=c+1$ containing at least $T$ copies of $K_4^{B_2}$. 
Each $K_4^{B_2}$ consists of a positive triangle and a vertex negatively connected to that triangle. Since vertex $l$ is negatively connected to all vertices of $V_H$, achieving $T=\binom{c}{3}$ such patterns requires a subset of $c$ vertices in $V_H$ that induces $\binom{c}{3}$ positive triangles. This is possible only if all $\binom{c}{2}$ edges among those vertices are positive, implying that they form a clique of size $c$ in $H$.

Thus, solving the $K_4^{B_2}$ maximization problem would solve CLIQUE. Since CLIQUE is NP-complete, the maximization problem is NP-hard.
\end{proof}

However, in the following lemma, we show that counting $K_4^{B_2}$ in a given subset of vertices could be done in polynomial time.

\begin{lemma}
Let $G^s=(V,\sigma)$ be a complete signed graph and let $S \subseteq V$ with $|S|=K$. 
The number of induced $K_4^{B_2}$ patterns in $S$ can be computed in $O(K^4)$ time.
\end{lemma}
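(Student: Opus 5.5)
The plan is brute-force enumeration of all $4$-element subsets of $S$, together with a constant-time test of whether each one induces a $K_4^{B_2}$.

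\emph{Setup.} Assume the sign function $\sigma$ restricted to $S$ is available as a $K\times K$ array, so that each query $\sigma(u,v)$ takes $O(1)$ time. If $\sigma$ is only given implicitly, as for $G^s_t(\boldsymbol{\mu},R_N)$, where $\sigma(u,v)=\sign\big((R_u^t-\mu_u)(R_v^t-\mu_v)\big)$, then each entry still costs $O(1)$ to evaluate. Building the whole table costs $O(K^2)$.

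\emph{Enumeration.} Loop over all $\binom{K}{4}=O(K^4)$ four-element subsets $\{i,j,k,l\}\subseteq S$, using four nested loops with increasing indices. For each quadruple, read the six signs $\sigma(i,j),\sigma(i,k),\sigma(i,l),\sigma(j,k),\sigma(j,l),\sigma(k,l)$.

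\emph{Test.} By the definition of $K_4^{B_2}$, the quadruple is counted if and only if some vertex is negatively joined to the other three, and those three are pairwise positive. There are only four candidate centres, and each candidate requires checking six fixed signs, so the test uses at most $24$ comparisons. Equivalently, the test passes exactly when the negative edges form a star $K_{1,3}$: the negative-edge count is $3$ and some vertex has negative degree $3$ within the quadruple. A quadruple passes for at most one centre, because the three negative edges determine it uniquely. Hence no quadruple is double counted, and incrementing a counter once per passing quadruple gives exactly $|\mathcal{K}_4^{t,S,B_2}|$.

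\emph{Total cost.} The running time is $O(K^2)+O(1)\cdot\binom{K}{4}=O(K^4)$. Summing over the local times $t=1,\dots,T$, as needed for OPT2, gives $O(TK^4)$.

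There is no real obstacle here. The only point needing care is the uniqueness of the centre, which is what makes the count exact rather than an overcount; it follows immediately from the star shape of the negative edges. One could also remark on a faster approach. For each vertex $i$, the quadruples centred at $i$ are the positive triangles inside the negative neighbourhood of $i$. In the balanced graphs $G^s_t(\boldsymbol{\mu},R_N)$, the count can even be written in closed form using the bipartition $S=S_+\cup S_-$ by the sign of $R_u^t-\mu_u$:
\[
|S_-|\binom{|S_+|}{3}+|S_+|\binom{|S_-|}{3},
\]
which is $O(K)$ per time step. This refinement goes beyond what the lemma asks.
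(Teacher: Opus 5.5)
Your argument is correct and matches the paper's proof: enumerate the $\binom{K}{4}$ four-element subsets of $S$ and test the six edge signs of each in constant time, for $O(K^4)$ in total. Your extra remarks go beyond the paper and are sound. These are the uniqueness of the centre and the $O(K)$ closed form $|S_-|\binom{|S_+|}{3}+|S_+|\binom{|S_-|}{3}$ for $G^s_t$. One caveat applies to the closed form: a vertex with $R_u^t=\mu_u$ has only positive edges under the paper's $\geq 0$ convention, so it must be left out of both $S_+$ and $S_-$ rather than assigned to either side.
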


\begin{proof}
There are exactly $\binom{K}{4}$ subsets of size four in $S$. For each such subset, checking whether it forms a $K_4^{B_2}$ requires examining the six edge signs, which takes constant time. Therefore the total running time is $O(K^4)$.
\end{proof}

\subsubsection{Optimization workflow for dimension reduction}

Although OPT2 is NP-hard, and the development of approximation algorithms with provable guarantees remains an open problem, we propose a structured optimization workflow aimed at dimension reduction, within which OPT1 naturally arises as a preliminary step. 

Given a complete signed graph $G=(V,\sigma)$, the problem OPT2 seeks to identify a subset $S \subseteq V$ whose cardinality is bounded above by $K$ such that the induced subgraph on $S$ jointly maximizes the number of balanced signed $4$-cliques of type $K_4^{B_2}$ and the cumulative hedge scores of the selected assets. Formally, this corresponds to a higher-order combinatorial optimization problem that captures both hedging characteristics and higher-order dependence structure encoded by signed motifs. The NP-hardness of OPT2 implies that exact optimization is computationally infeasible for large-scale instances. Consequently, an important direction for future research is the development of principled approximation methods for estimating OPT2. Potential approaches include continuous relaxations and semidefinite programming formulations, spectral and graph-theoretic heuristics that exploit the structure of signed networks, and submodularity-inspired approximations based on suitable surrogate objective functions. Establishing approximation guarantees, as well as identifying structural regimes under which efficient algorithms perform well, remains a promising and largely unexplored avenue for advancing network motif based portfolio selection.

To address the computational challenges, one could adopt a two-stage framework that serves as a practical approximation scheme. In the first stage (OPT1), perform a coarse filtering of the asset universe by selecting the top $K_1$ assets according to the score $s_i = h_i \mu_i$, which combines hedging potential and average return. This step reduces the dimensionality of the problem while preserving economically relevant assets. In the second stage, the selection can be refined by solving a restricted combinatorial optimization problem over subsets of size $K_2 \leq K_1$. The objective is to maximize the number of $K_4^{B_2}$ patterns aggregated over all daily signed graphs in the training period, thereby capturing higher-order co-movement structures within a tractable candidate universe. Due to the classical hardness of this problem, alternative methods such as quantum-classical methods developed in \cite{kordonowy2026perfectly} \cite{adhikari2026quantum} can be extended to estimate $K_4^{B_2}$ counts in complete signed networks.

\section{Empirical analysis}\label{Sec:5}

Since OPT2 in Eq.~(\ref{eqn:opts2}) is computationally hard, in this paper, we consider finding a reduced asset universe employing the OMV reduction Algorithm  \ref{Alg:dr_omv}. This hedge 
score-based optimization  provides a  reduced universe $S_K$ with a desired number of top $K$ assets selected by hedge scaled expected returns over the entire period of time $T.$ The complete subgraph induced by $S$ is balance due to its construction which yields a positive value of skewness obtained by the combinatorial analogue of balance triangles in the reduced universe. Besides, the reduced universe implicitly guarantees existence of negative edges, which is one of the conbinatorial interpretation of minimizing kurtosis as explained in the previous section.

\subsection{Data, methodologies and backtesting results}
To test the proposed methodology for dimensionality reduction and portfolio construction, we consider the dataset of  assets whose data are aligned with the Google stock in S\&P500 index from August 2006 to Dec 2021. This data forms a universe of 199 assets obtained from Yahoo finance and publicly available in \href{https://www.kaggle.com/datasets/paultimothymooney/stock-market-data/data}{Kaggle}. The assets are given by 'A', 'AAP', 'ABMD', 'ABT', 'ACN', 'ADI', 'ADM', 'ADP', 'ADSK', 'AJG', 'AKAM', 'ALB', 'ALGN', 'ALK', 'AMAT', 'AMD', 'AME', 'AMGN', 'AMT', 'AMZN', 'AOS', 'APA', 'APD', 'ARE', 'ATVI', 'AVY', 'BAC', 'BAX', 'BBY', 'BDX', 'BEN', 'BIIB', 'BIO', 'BRK-A', 'BSX', 'BWA', 'BXP', 'CAG', 'CB', 'CCI', 'CDE', 'CHD', 'CHRW', 'CINF', 'CLX', 'CMI', 'CNC', 'COO', 'COP', 'CPB', 'CPRT', 'CRM', 'CSCO', 'CTAS', 'CTSH', 'CUK', 'D', 'DGX', 'DOV', 'DPZ', 'DVA', 'EA', 'EBAY', 'ECL', 'EFX', 'EL', 'EMN', 'ES', 'EW', 'EXR', 'FAST', 'FIS', 'FISV', 'FITB', 'FLS', 'FMC', 'FTI', 'GGG', 'GILD', 'GIS', 'GOOG', 'GPC', 'GPN', 'GWW', 'HAS', 'HBAN', 'HD', 'HES', 'HRB', 'HRL', 'HST', 'HSY', 'HUM', 'IDXX', 'IFF', 'ILMN', 'ISRG', 'ITW', 'IVZ', 'JBHT', 'JCI', 'JKHY', 'JNPR', 'JPM', 'K', 'KIM', 'KMB', 'KSS', 'LEG', 'LH', 'LNC', 'LNT', 'LOW', 'MAA', 'MAR', 'MCHP', 'MCO', 'MDLZ', 'MLM', 'MMC', 'MOS', 'MSFT', 'NEE', 'NEOG', 'NFLX', 'NI', 'NOC', 'NOV', 'NTAP', 'NTRS', 'NVR', 'NWL', 'O', 'ODFL', 'OMC', 'ORLY', 'OXY', 'PAYX', 'PCAR', 'PH', 'PHM', 'PKG', 'PKI', 'PLD', 'PNW', 'PPG', 'PRU', 'PVH', 'RCL', 'REG', 'RF', 'RHI', 'RLI', 'ROK', 'ROL', 'ROP', 'SBUX', 'SCHW', 'SEE', 'SHW', 'SIVB', 'SLB', 'SLG', 'SNPS', 'SO', 'SPG', 'SRE', 'STT', 'SWK', 'SYK', 'T', 'TJX', 'TMO', 'TRV', 'TSCO', 'TSN', 'TTWO', 'TXT', 'TYL', 'UDR', 'URI', 'VFC', 'VMC', 'VRSN', 'VZ', 'WAT', 'WBA', 'WDC', 'WEC', 'WHR', 'WM', 'WMB', 'WRB', 'WST', 'WYNN', 'XEL', 'YUM', 'ZBH', 'ZION'. 

We investigate the efficiency of the collection of potential assets $S_{K}$ for portfolio formation obtained by the solution of equation (\ref{eqn:opts}). We employ the Sharpe ratio optimization and equally weighted portfolio for portfolio formation for a time period $T$, which is considered a year $y\in\{2006,2007, \hdots, 2020\}$, and the performance of the constructed portfolio is evaluated for the year $y+1$ over the full universe and the reduced universe $S_{K}.$ Maximizing Sharpe ratio subject to the no short-selling is defined as 
\begin{equation}\label{eqn:SharpMax}
    \max_w \frac{w^\dagger\widehat{\mu}}{\sqrt{w^\dagger\widehat{\Sigma}w}}
\end{equation} Here $\widehat{\mu}$ denots the vector of means  and $\widehat{\Sigma}$ is the covariance matrix of daily log returns of the concerned assets, respectively. Note that, maximizing Sharpe ratio is a widely used equivalent efficient frontier selection rule (it picks the tangency portfolio)  and under no-short constraints it stays a convex-feasible but nonlinear problem. We employ SciPy's \textit{Sequential Least Squares Programming} (SLSQP) for solving equation (\ref{eqn:SharpMax}). For equally weighted portfolios, the weights corresponding to each asset is given by $w_j=1/|S|,$ where $S$ is the set of all assets for full universe, and $S=S_K$ for the reduced universe.

\begin{figure}[h]
    \centering
    \subfigure[]{\includegraphics[width=0.45\textwidth]{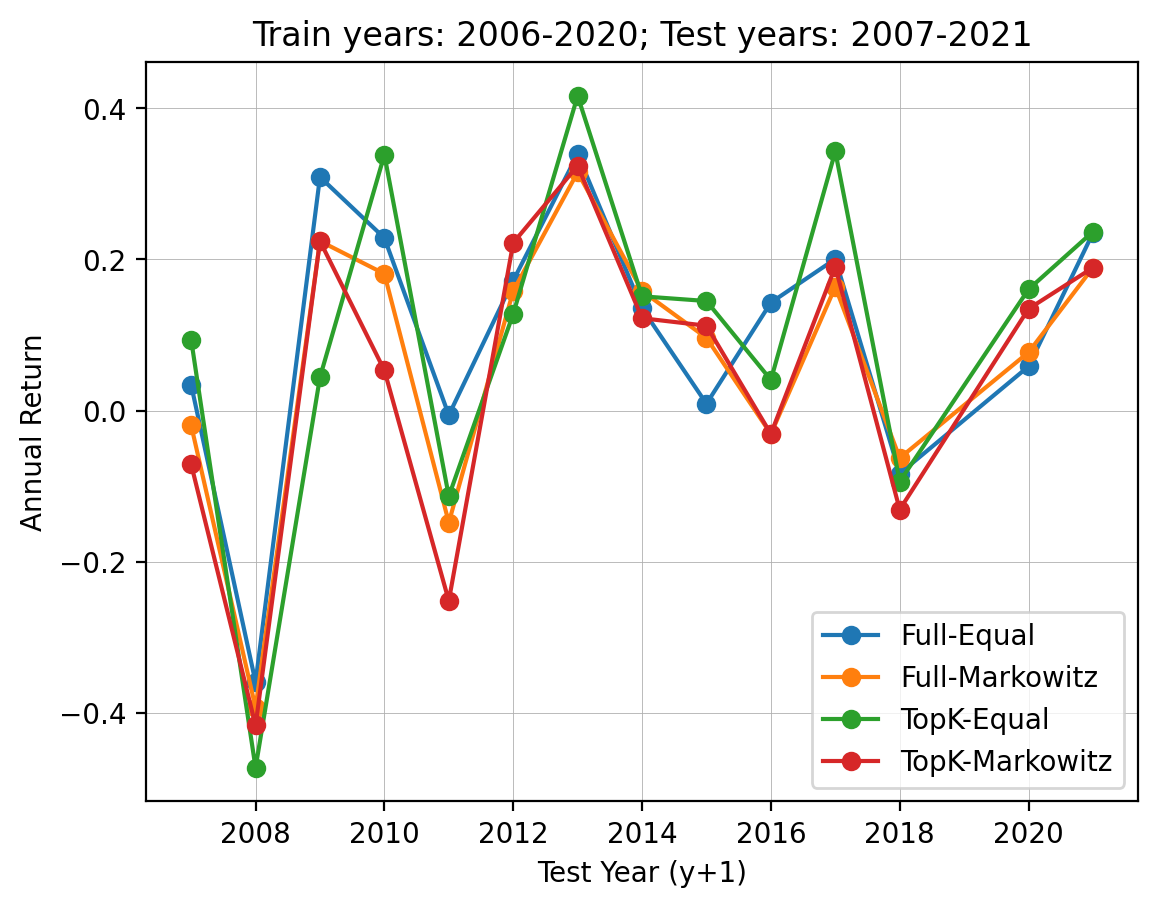}}
    ~ ~ ~
    \subfigure[]{\includegraphics[width=0.45\textwidth]{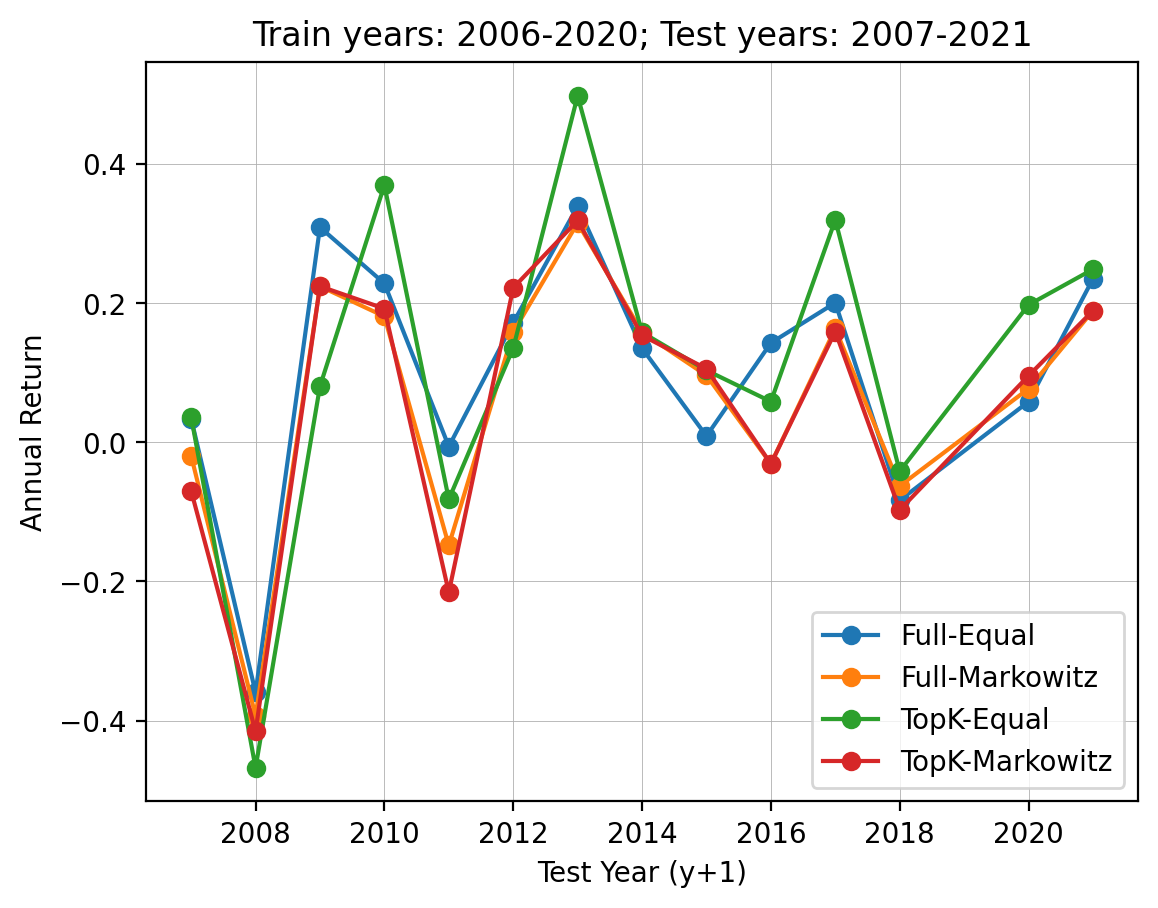}} 
    ~~~
    \subfigure[]{\includegraphics[width=0.45\textwidth]{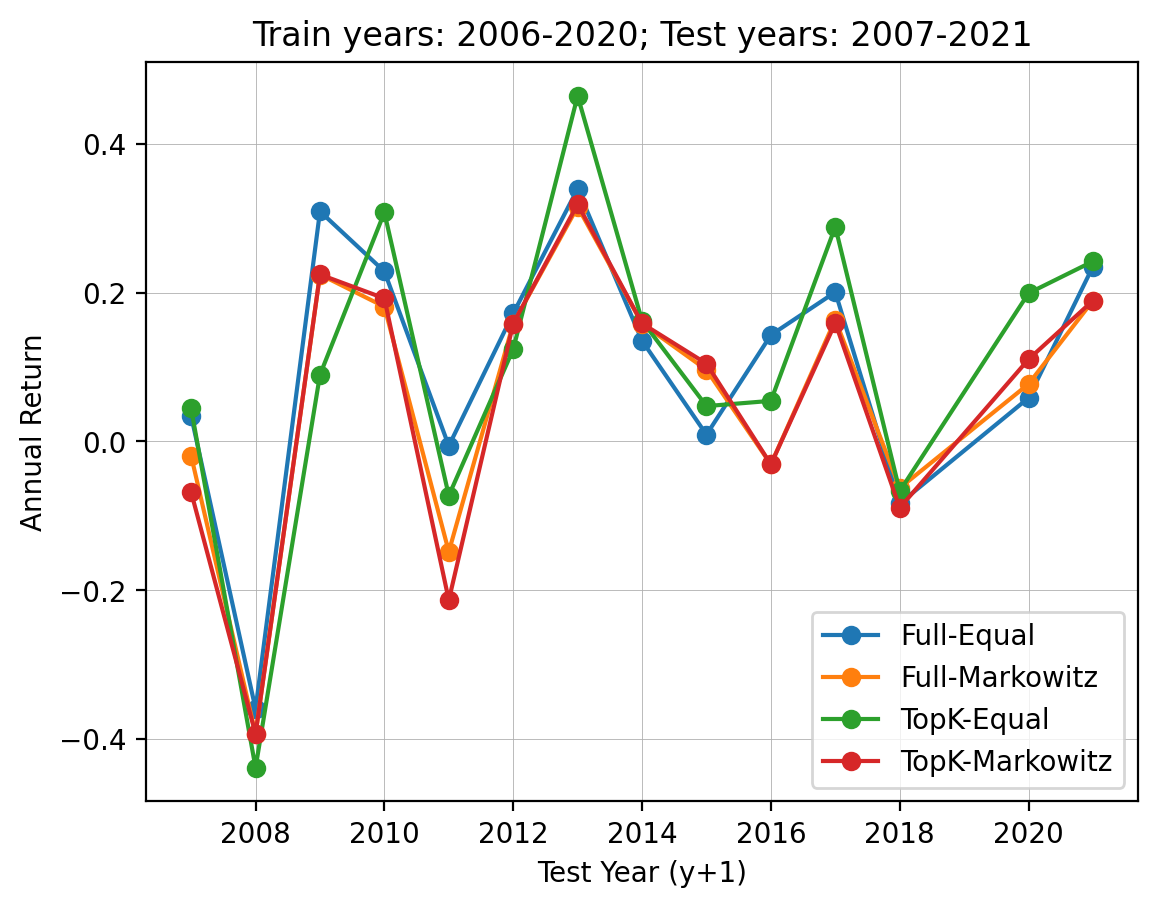}}
    ~~~
    \subfigure[]{\includegraphics[width=0.45\textwidth]{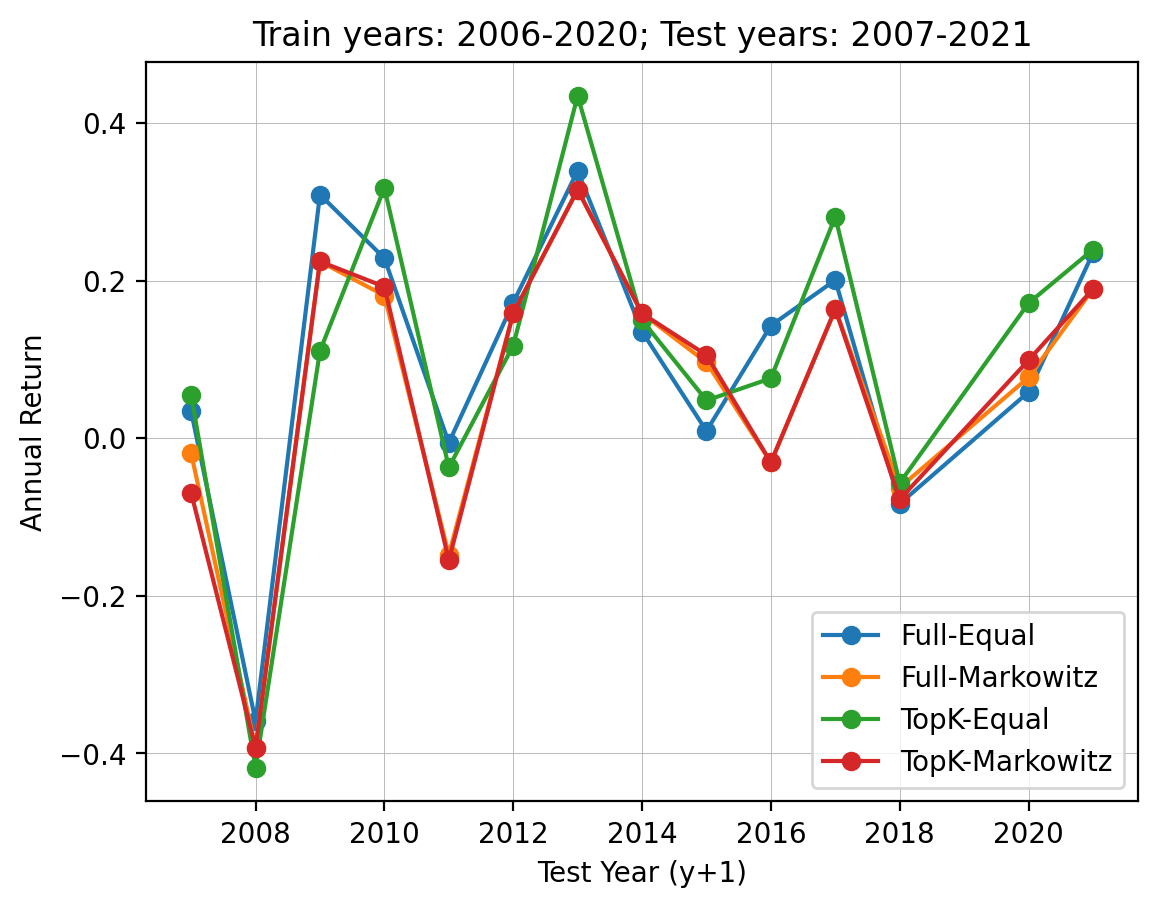}}
    \caption{Annual Return: (a) $K=20$, (b) $K=30$, (c) $K=40$, (d) $K=50.$}
    \label{fig:annualreturn}
\end{figure}

For backtesting on the out-of-sample data, we consider three metrics: annual return, annual volatility, and Sharpe value for comparing the portfolios. The daily log returns for each asset $j$ at a local time $t$ (a day) is computed as $r_{j,t}=\log(P_{j,t}/P_{j,t-1})$, where $P_{i,t}$ is the price of asset $j$ at time $t.$ Then for a given weight vector $w,$ the daily portfolio return is given by $r_{p,t}=\sum_{j=1}^n w_jr_{j,t},$ where $n$ is the number of assets in the portfolio. Then the annual return is computed as $\exp(252\overline{r}_p)-1,$ where $\overline{r}_p=\frac{1}{T}\sum_{t=1}^T r_{p,t}.$ The sample standard deviation of the daily log returns is given by $\sigma_p^{(daily)}=\sqrt{\frac{1}{T-1}\sum_{t=1}^T (r_{p,t}-\overline{r}_p)^2},$ and consequently the annual volatility is computed as $\sigma_p^{(ann)}=\sigma_p^{(daily)}\sqrt{252}.$ Finally, the Sharpe value for a portfolio is considered as the ratio of annual return and $\sigma_p^{(ann)}.$

In order to test the performance of our dimensionality reduction technique, we determine $S_{K}$ for $K\in\{20,30,40,50\}.$ In Tables \ref{tab:RU16-20}, \ref{tab:RU11-15} and \ref{tab:RU06-10}, we include assets in $S_{50}$ for all the years $2006, 2007, \hdots, 2020$ as obtained by executing Algorithm \ref{Alg:dr_omv}. The figures in Figure \ref{fig:annualreturn}, \ref{fig:annualV} and \ref{fig:SR} exhibit the annual return, annual volatility, and Sharpe values for the portfolios respectively, obtained by Sharpe value maximization (Full-Markowitz), Equally weighted portfolio (Full-Equal), Sharpe value maximization of reduced universe $S_{K}$ (TopK-Markowitz), Equally weighted portfolio of the reduced universe $S_{K}$ (TopK-Equal).

Obviously, annual return measures the profitability, and hence higher values are better. From Figure \ref{fig:annualreturn}, it is clear that the TopK-Equal portfolio and TopK-Markowitz obtained by the proposed OMV dimension reduction technique performs moderately better than the full universe across almost all the 15 years (2006 - 2020) except the year 2009 for all choices of $K$. The annual volatility, the standard deviation of annualized returns measure how uncertain the portfolio's return are, hence lower is better. It can be seen from Figure \ref{fig:annualV} that the obtained portfolios based on reduced universe are volatile across all years except the year $2009.$ Finally in Figure \ref{fig:SR}, it can be observed that the results are mixed, and on average the Sharpe value of equally weighted portfolios, either the TopK or the full universe provide the highest values. Based on these results, we could conclude that the OMV reduced portfolio for this dataset forms a highly volatile profitable portfolios compared to the full universe portfolio optimization, however provide moderately better annual return across all 15 years of experiment. 

\begin{figure}[h]
    \centering
    \subfigure[]{\includegraphics[width=0.45\textwidth]{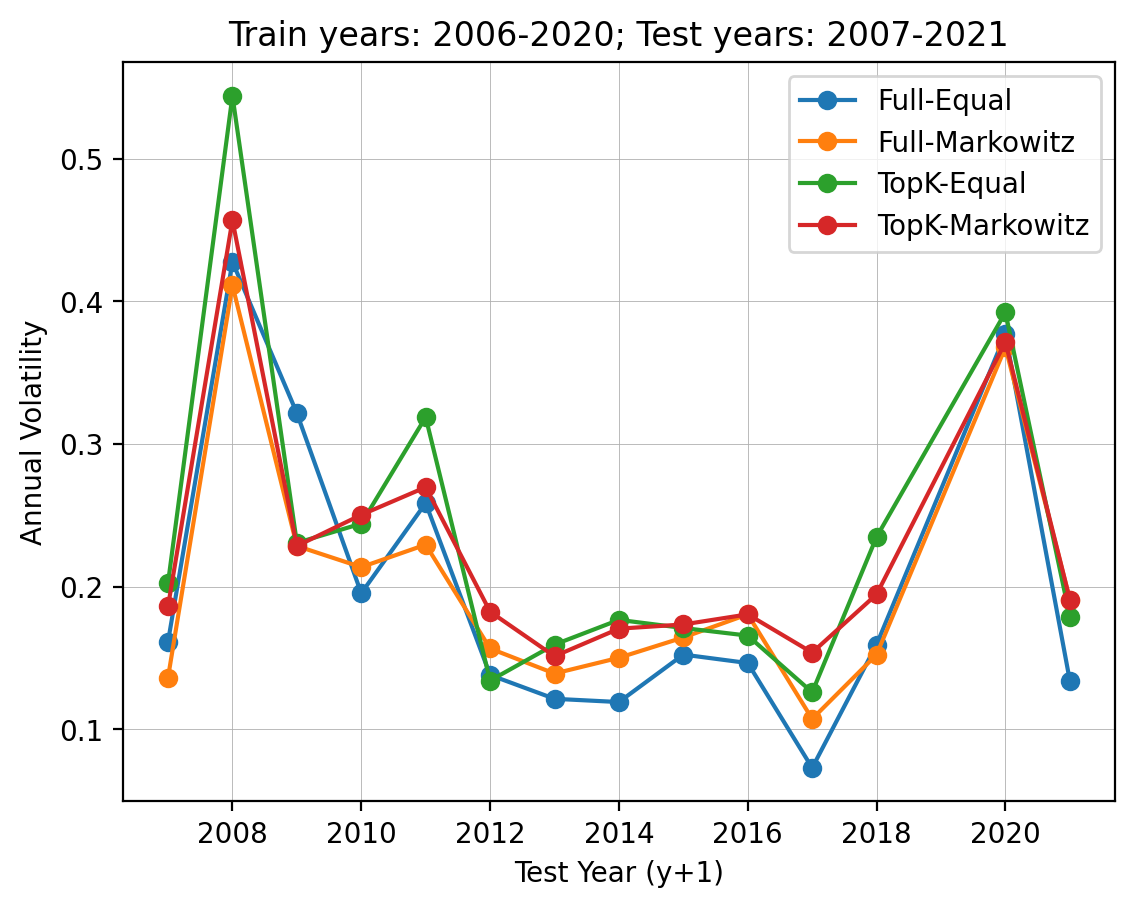}}
    ~ ~ ~
    \subfigure[]{\includegraphics[width=0.45\textwidth]{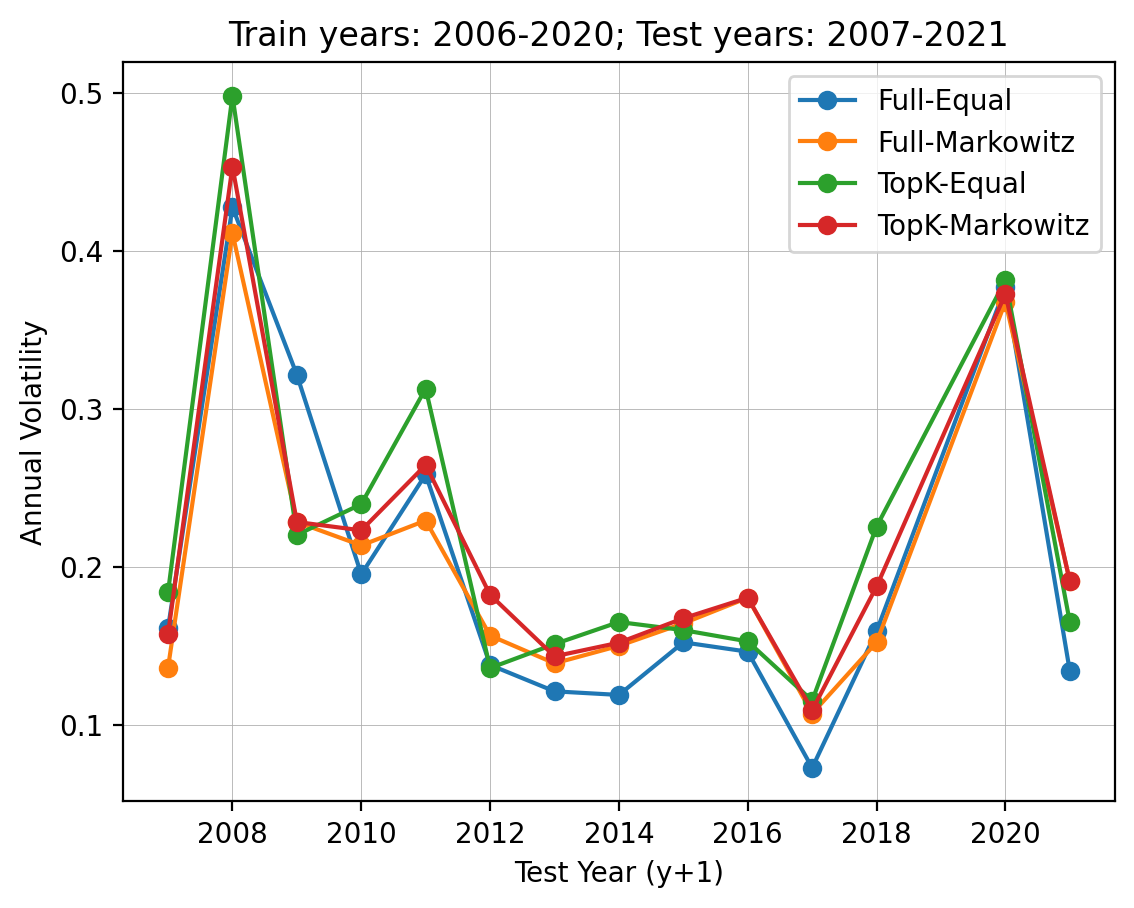}} 
    ~~~
    \subfigure[]{\includegraphics[width=0.45\textwidth]{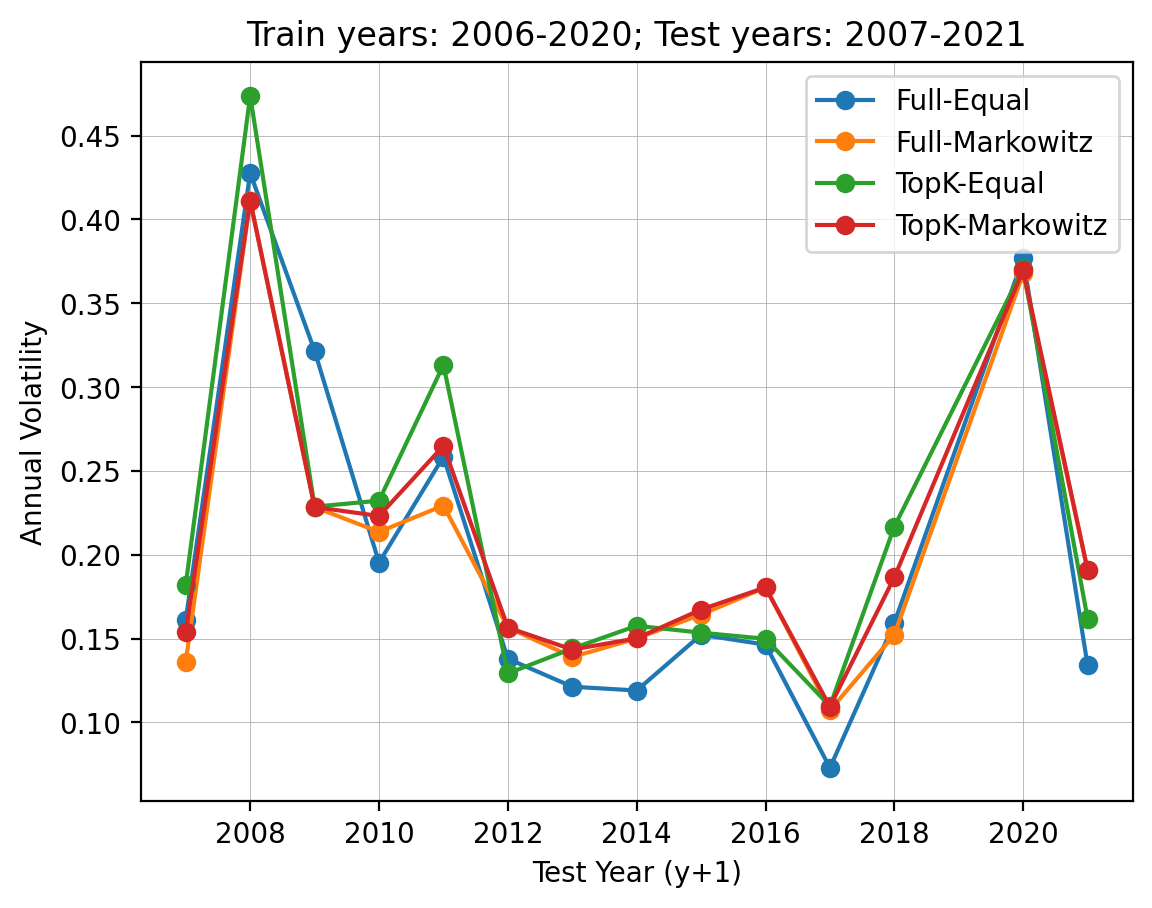}}
    ~~~
    \subfigure[]{\includegraphics[width=0.45\textwidth]{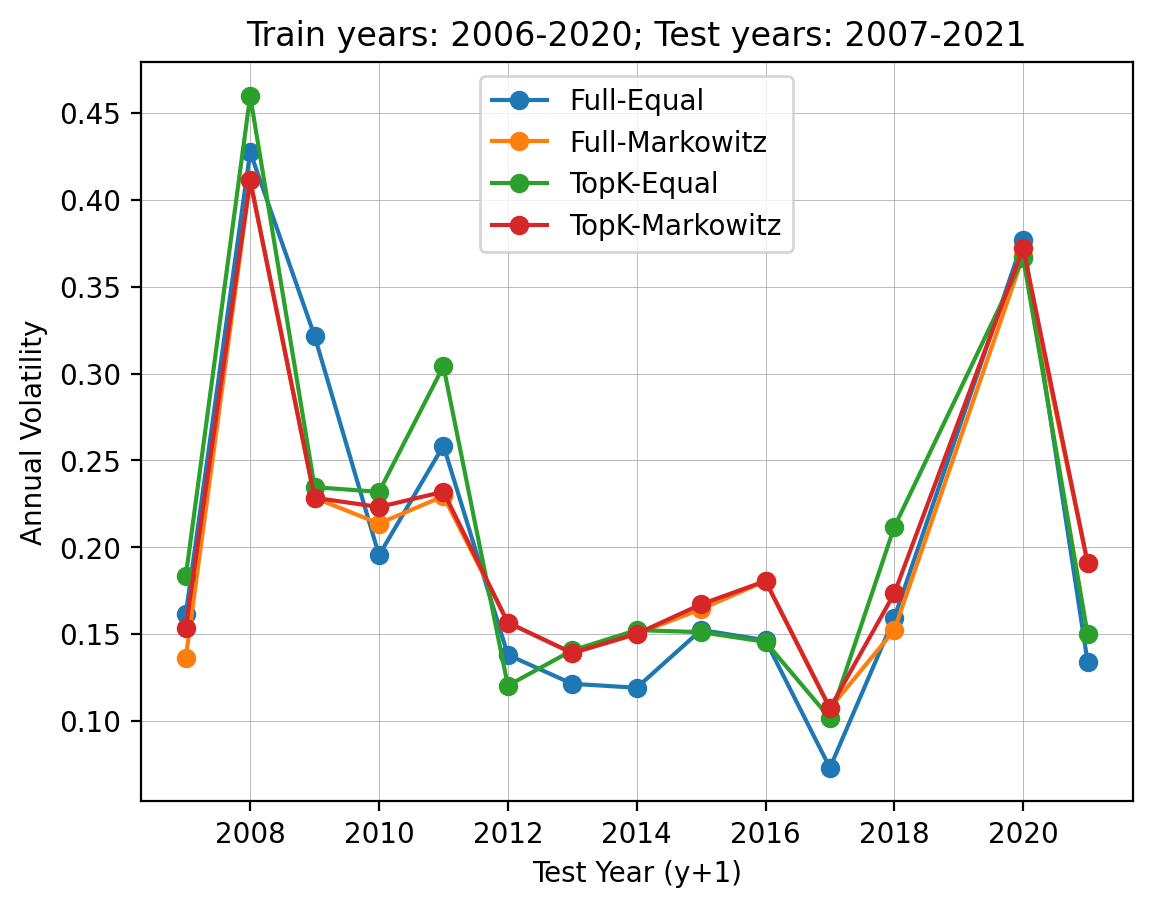}}
    \caption{Annual Volatility: (a) $K=20$, (b) $K=30$, (c) $K=40$, (d) $K=50.$}
    \label{fig:annualV}
\end{figure}

\begin{figure}[h]
    \centering
    \subfigure[]{\includegraphics[width=0.45\textwidth]{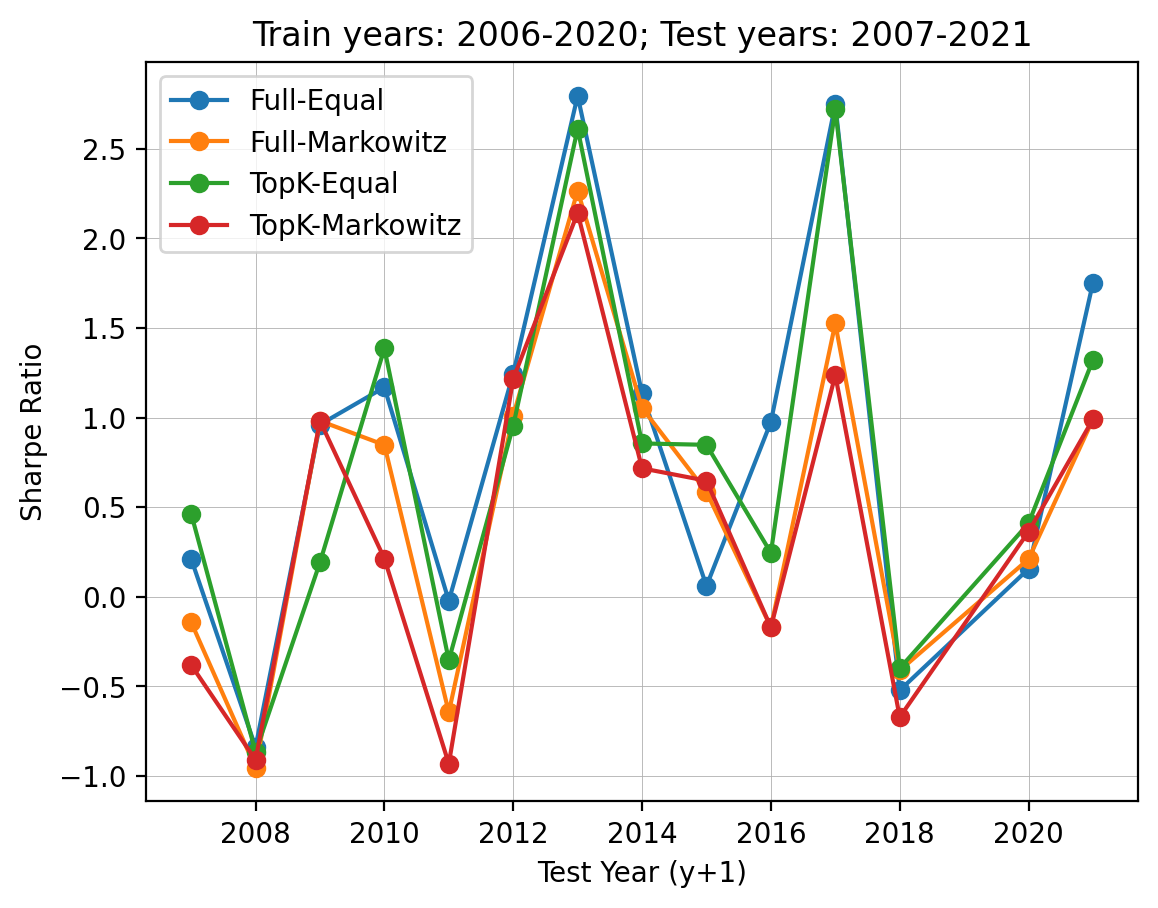}}
    ~ ~ ~
    \subfigure[]{\includegraphics[width=0.45\textwidth]{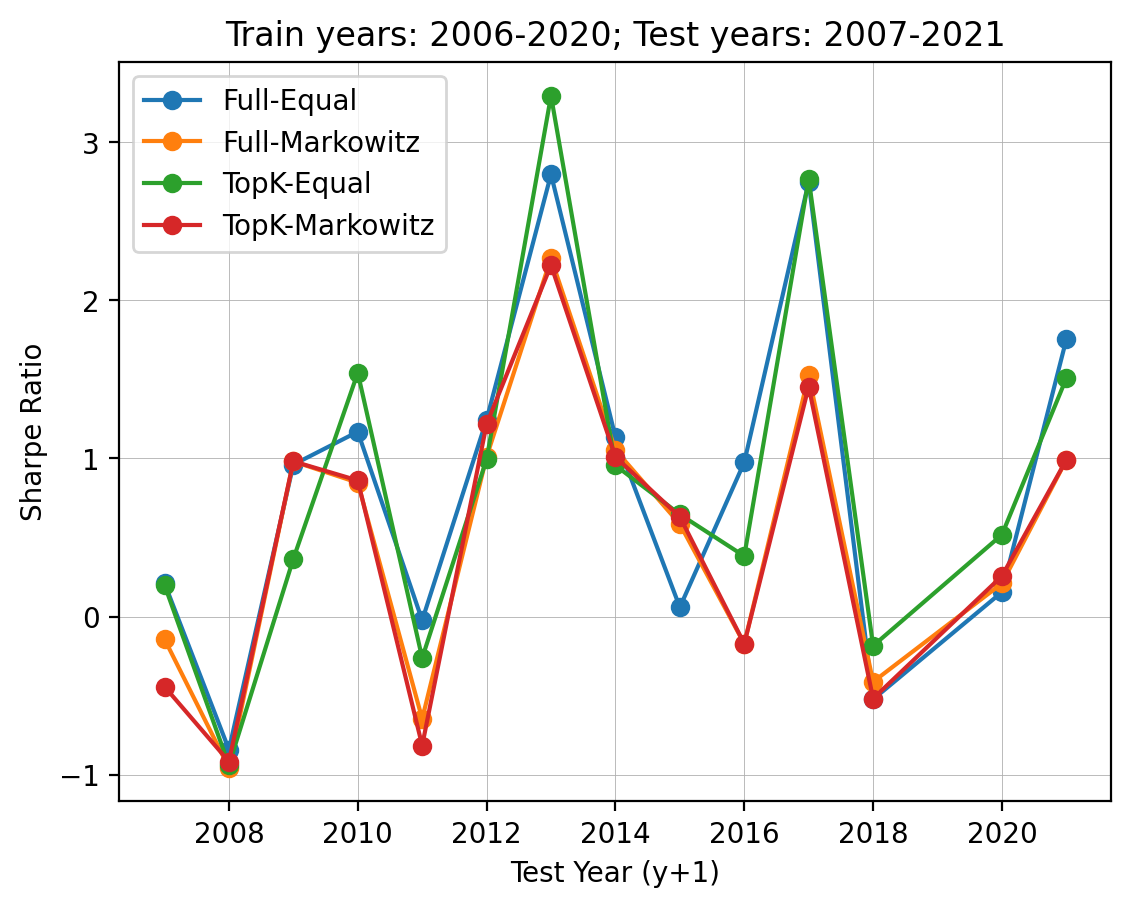}} 
    ~~~
    \subfigure[]{\includegraphics[width=0.45\textwidth]{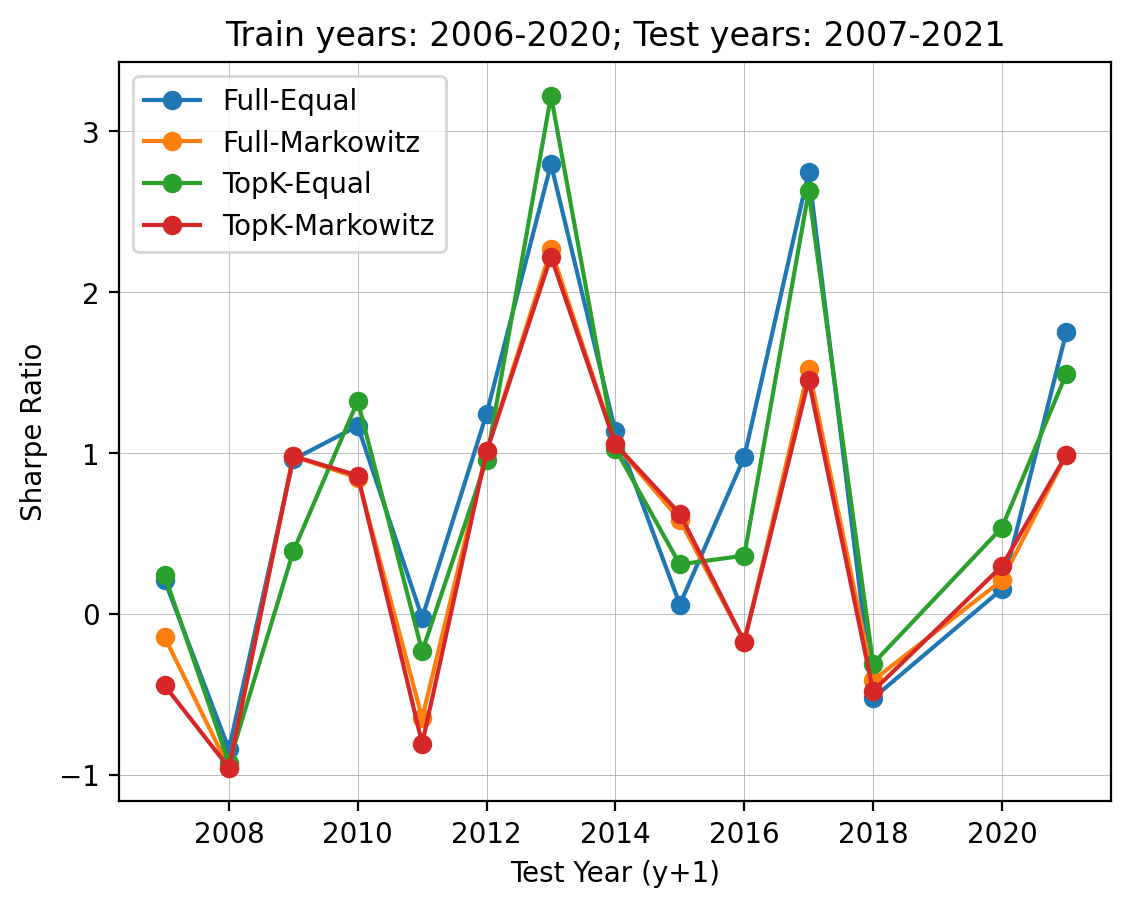}}
    ~~~
    \subfigure[]{\includegraphics[width=0.45\textwidth]{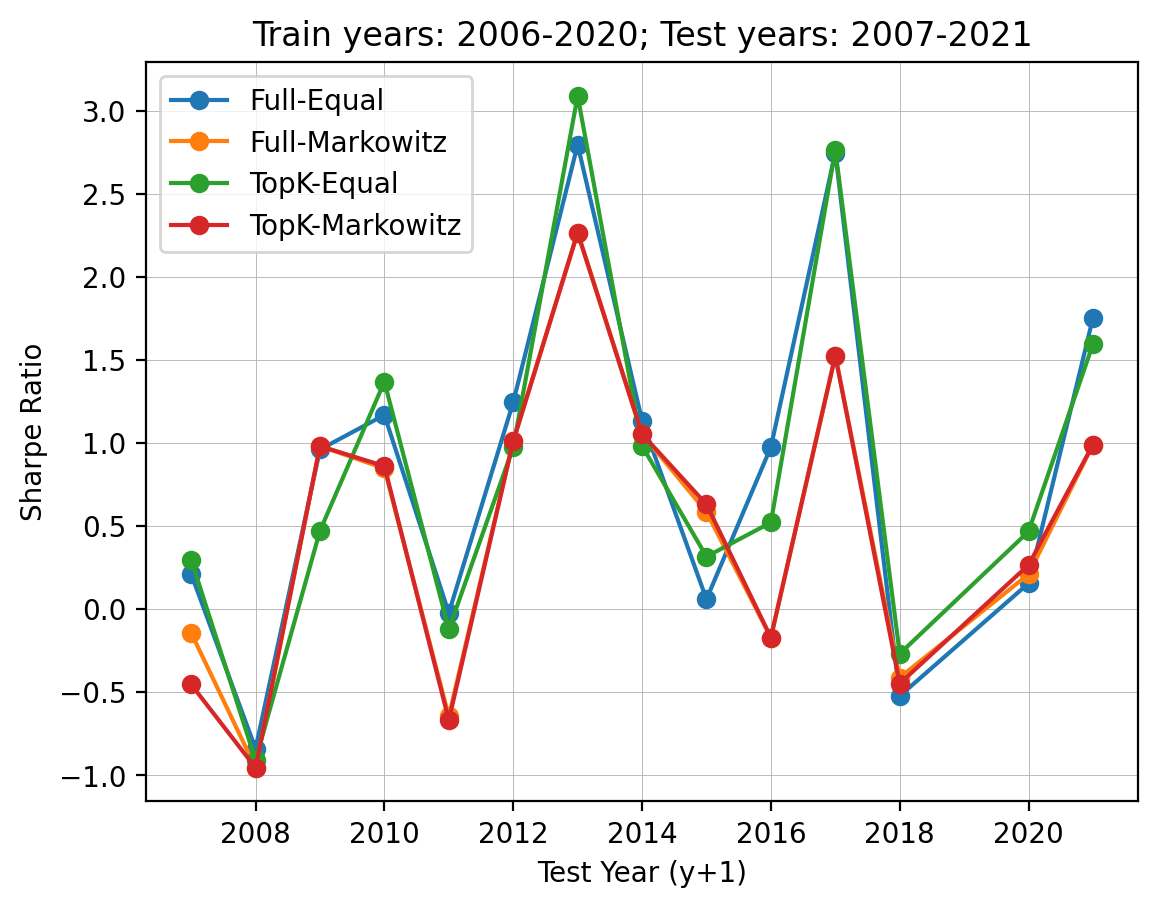}}
    \caption{Sharpe ratio:  (a) $K=20$, (b) $K=30$, (c) $K=40$, (d) $K=50.$}
    \label{fig:SR}
\end{figure}

\begin{table}[htp]
    \centering
    \begin{tabular}{c|c|c}
      Train year & Test year & $S_{50}$  \\
      \hline \hline
    2020 & 2021 &  \begin{tiny}'AMD', 'WST', 'ABMD', 'ALB', 'IDXX', 'AMZN', 'TTWO', 'NFLX', 'SNPS', 'ALGN', \end{tiny}\\
    && \begin{tiny} 'ROL', 'ATVI', 'ADSK', 'BIO', 'TSCO', 'DVA', 'ODFL', 'TYL', 'PKI', 'SIVB',\end{tiny} \\
    && \begin{tiny}'TMO', 'EBAY', 'CLX', 'AMAT', 'MSFT', 'EA', 'DPZ', 'CRM', 'CPRT', 'A', \end{tiny} \\
    && \begin{tiny}'GGG', 'EFX', 'ISRG', 'URI', 'CDE', 'LOW', 'GOOG', 'MCHP', 'FAST', 'CTSH', \end{tiny}\\
    && \begin{tiny}'CHD', 'NEE', 'AJG', 'PH', 'EL', 'CTAS', 'ABT', 'SHW', 'AKAM', 'CMI'\end{tiny} \\
    \hline
    2019 & 2020 &   \begin{tiny}'AMD', 'CDE', 'CPRT', 'AMAT', 'TSN', 'GPN', 'WDC', 'HES', 'MLM', 'NVR',\end{tiny} \\
    && \begin{tiny}
        'BBY', 'CAG', 'MCO', 'TYL', 'EL', 'FISV', 'CPB', 'DOV', 'SNPS', 'FMC', \end{tiny} \\
    && \begin{tiny}
        'URI', 'PHM', 'BIO', 'ODFL', 'CTAS', 'AMT', 'SO', 'PLD', 'EW', 'WST',    \end{tiny} \\
    && \begin{tiny}
        'MSFT', 'VMC', 'DVA', 'NEE', 'APD', 'SHW', 'ACN', 'HSY', 'EFX', 'VFC',\end{tiny} \\
    && \begin{tiny}
        'SRE', 'AVY', 'ZBH', 'BAC', 'NOC', 'GIS', 'KIM', 'MAA', 'ADSK', 'IDXX' \end{tiny} \\
        \hline 
    2018 & 2019 &  \begin{tiny}
        'AMD', 'ABMD', 'AAP', 'ORLY', 'NFLX', 'BSX', 'DPZ', 'CHD', 'ILMN', 'EW',
    \end{tiny}\\
    && \begin{tiny}
        'CRM', 'AMZN', 'ISRG', 'VRSN', 'KSS', 'ABT', 'ADSK', 'GWW', 'HRL', 'TJX',
    \end{tiny} \\
    && \begin{tiny}
         'IDXX', 'MSFT', 'COO', 'ROL', 'TMO', 'AJG', 'HUM', 'COP', 'CNC', 'RLI',
    \end{tiny} \\
    && \begin{tiny}
         'MOS', 'NEE', 'TSCO', 'CSCO', 'YUM', 'AMT', 'AMGN', 'SBUX',  'O', 'ADP',
    \end{tiny} \\
    && \begin{tiny}
        'FISV', 'CPRT', 'ECL', 'FIS', 'JKHY', 'NTAP', 'CTAS', 'VZ', 'PKI', 'WEC'
    \end{tiny} \\
   \hline
   2017 &2018 &  \begin{tiny}
       'ALGN', 'TTWO', 'NVR', 'WYNN', 'CNC', 'PHM', 'ATVI', 'ILMN', 'ISRG', 'EL',
   \end{tiny}\\
   && \begin{tiny}
       'FMC', 'ABMD', 'BBY', 'MAR', 'AVY', 'URI', 'AMZN', 'AMAT', 'NTAP', 'GGG',
   \end{tiny} \\
   && \begin{tiny}
       'NFLX', 'PVH', 'MCO', 'CPRT', 'ODFL', 'VRSN', 'SHW', 'CRM', 'ABT', 'BAX',
   \end{tiny} \\
   && \begin{tiny}
       'ALB', 'GPN', 'AME', 'A', 'SWK', 'SNPS', 'RCL', 'WAT', 'ROK', 'HD',
   \end{tiny} \\
   && \begin{tiny}
        'ADSK', 'VFC', 'PKG', 'PH', 'PKI', 'ROP', 'ROL', 'AMT', 'MSFT', 'MCHP'
   \end{tiny} \\
  \hline
   2016 & 2017 & \begin{tiny}
       'CDE', 'AMD', 'AMAT', 'IDXX', 'MLM', 'ZION', 'CMI', 'ALB', 'RF', 'APA',
   \end{tiny} \\
   && \begin{tiny}
       'ODFL', 'DPZ', 'SIVB', 'URI', 'TTWO', 'ALGN', 'BBY', 'FMC', 'CPRT',
   \end{tiny} \\
   && \begin{tiny}
       'PH', 'WST', 'MCHP', 'WM', 'PKG', 'FITB', 'JBHT', 'PCAR', 'NTAP', 'BAC',
   \end{tiny} \\
   && \begin{tiny}
       'BIO', 'HES', 'COO', 'VMC', 'ITW', 'LNC', 'ROK', 'ROL', 'SYK', 'ADI',
   \end{tiny} \\
   && \begin{tiny}
       'JPM', 'WYNN', 'DGX', 'AKAM', 'PRU', 'ABMD', 'SNPS', 'T', 'ARE', 'AJG', 'NOC'
   \end{tiny} \\
   \hline 
    \end{tabular}
    \caption{Reduced universes for backtesting: For year $y\in\{2020,2019, 2018, 2017, 2016\},$ $S_{50}$ is obtained using Algorithm \ref{Alg:dr_omv}. }
    \label{tab:RU16-20}
\end{table}

\begin{table}[htp]
    \centering
    \begin{tabular}{c|c|c}
      Train year & Test year & $S_{50}$  \\
      \hline \hline
    2015 & 2016 &  \begin{tiny} 'NFLX', 'ABMD', 'AMZN', 'ATVI', 'GPN', 'TYL', 'EXR', 'EA', 'HRL', 'GOOG',\end{tiny}\\
    && \begin{tiny} 'VRSN', 'SBUX', 'VMC', 'ALK', 'BSX', 'TSN', 'EFX', 'AOS', 'CRM', 'ORLY',  \end{tiny} \\
    && \begin{tiny} 'NVR', 'CNC', 'CUK', 'HUM', 'TTWO', 'HAS', 'RLI', 'NOC', 'EW', 'JKHY', \end{tiny} \\
    && \begin{tiny} 'JNPR', 'FISV', 'MLM', 'RCL', 'UDR', 'MAA', 'HD', 'MDLZ', 'PKI', 'CLX',\end{tiny}\\
    && \begin{tiny} 'AVY', 'ALGN', 'CPB', 'DPZ', 'ROP', 'MSFT', 'NI', 'ROL', 'WAT', 'EBAY'\end{tiny} \\
    \hline
    2014 & 2015 &  \begin{tiny} 'EW', 'EA', 'CNC', 'RCL', 'ILMN', 'TTWO', 'ALK', 'MAR', 'ORLY', 'ABMD', \end{tiny} \\
    && \begin{tiny} 'ODFL', 'ARE', 'AAP', 'SHW', 'ISRG', 'AMGN', 'AMAT', 'HUM', 'EXR', 'REG',
         \end{tiny} \\
    && \begin{tiny} 'IDXX', 'LOW', 'RHI', 'WBA', 'LEG', 'DPZ', 'UDR', 'WEC', 'PNW', 'XEL',
           \end{tiny} \\
    && \begin{tiny} 'O', 'LNT', 'WDC', 'ES', 'COO', 'AKAM', 'NI', 'BXP', 'CHRW', 'SLG',
        \end{tiny} \\
    && \begin{tiny} 'SPG', 'URI', 'KIM', 'CTAS', 'NVR', 'DGX', 'APD', 'NOC', 'BRK-A', 'HD'
         \end{tiny} \\
        \hline 
    2013 & 2014 & \begin{tiny}
        'NFLX', 'BBY', 'ABMD', 'BSX', 'GILD', 'TYL', 'ALGN', 'ILMN', 'BIIB', 'WDC',
    \end{tiny}\\
    && \begin{tiny}
       'SEE', 'SIVB', 'LNC', 'WST', 'SCHW', 'TSCO', 'TSN', 'WYNN', 'ALK', 'ATVI',
    \end{tiny} \\
    && \begin{tiny}
         'URI', 'AMD', 'AOS', 'VFC', 'TMO', 'NOC', 'PRU', 'EA', 'PKG', 'ADM',
    \end{tiny} \\
    && \begin{tiny}
     'TTWO', 'HUM', 'GOOG', 'WBA', 'HRB', 'AMZN', 'DPZ', 'HES', 'AMAT', 'FIS',    
    \end{tiny} \\
    && \begin{tiny}
        'FLS', 'AAP', 'WHR', 'ODFL', 'BWA', 'VRSN', 'HAS', 'TJX', 'MCO', 'NEOG'
    \end{tiny} \\
     \hline
   2012 &2013 &  \begin{tiny}
    'PHM', 'WHR', 'BAC', 'ILMN', 'GILD', 'SHW', 'EBAY', 'CRM', 'EMN', 'CCI',  
   \end{tiny}\\
   && \begin{tiny}
       'TYL', 'RF', 'PPG', 'PVH', 'AOS', 'URI', 'EXR', 'PKG', 'PKI', 'NEOG',
   \end{tiny} \\
   && \begin{tiny}
       'DVA', 'HD', 'MCO', 'AMZN', 'WST', 'LOW', 'FLS', 'NFLX', 'NWL', 'WDC',
   \end{tiny} \\
   && \begin{tiny}
     'EFX', 'NVR', 'TMO', 'RCL', 'AMGN', 'JBHT', 'BIIB', 'BAX', 'TJX', 'FMC', 
   \end{tiny} \\
   && \begin{tiny}
        'TXT', 'COO', 'JPM', 'FIS', 'AME', 'AMT', 'FISV', 'SRE', 'EW', 'LNC'
   \end{tiny} \\
   \hline
   2011 & 2012 &  \begin{tiny}
       'DPZ', 'ABMD', 'ISRG', 'BIIB', 'HUM', 'CNC', 'VFC', 'TJX', 'SBUX', 'TSCO',
   \end{tiny} \\
   && \begin{tiny}
      'EL', 'TYL', 'FAST', 'HRB', 'RLI', 'HSY', 'ORLY', 'NI', 'EXR', 'CHD', 
   \end{tiny} \\
   && \begin{tiny}
       'ALK', 'WMB', 'GWW', 'URI', 'EA', 'COO', 'SPG', 'CPRT', 'D', 'ODFL',
   \end{tiny} \\
   && \begin{tiny}
    'MCO', 'SO', 'WRB', 'YUM', 'CTAS', 'ALGN', 'TSN', 'MDLZ', 'AMGN', 'ABT',  
   \end{tiny} \\
   && \begin{tiny}
       'HD', 'LNT', 'CAG', 'WEC', 'AMT', 'XEL', 'NEE', 'KMB', 'FTI', 'GPC'
   \end{tiny} \\
   \hline 
    \end{tabular}
    \caption{Reduced universes for backtesting: For year $y\in\{2015,2014, 2013, 2012, 2011\}$, $S_{50}$ is obtained using Algorithm \ref{Alg:dr_omv}. }
    \label{tab:RU11-15}
\end{table}

\begin{table}[htp]
    \centering
    \begin{tabular}{c|c|c}
      Train year & Test year & $S_{50}$  \\
      \hline \hline
    2010 & 2011 &  \begin{tiny} 'NFLX', 'URI', 'CMI', 'ILMN', 'BWA', 'EW', 'DPZ', 'ZION', 'AKAM', 'HBAN',\end{tiny}\\
    && \begin{tiny}  'TSCO', 'CRM', 'RCL', 'WYNN', 'NEOG', 'AAP', 'ALK', 'EL', 'NTAP', 'ODFL', \end{tiny} \\
    && \begin{tiny} 'ORLY', 'CDE', 'CTSH', 'PH', 'COO', 'FITB', 'PVH', 'PCAR', 'FTI', 'NOV', \end{tiny} \\
    && \begin{tiny} 'ROL', 'AME', 'ALB', 'HAS', 'EXR', 'ADSK', 'ROK', 'MAR', 'HST', 'TSN',\end{tiny}\\
    && \begin{tiny} 'UDR', 'ROP', 'FAST', 'FMC', 'PPG', 'YUM', 'SBUX', 'GWW', 'HRL', 'RF'\end{tiny} \\
    \hline
    2009 & 2010 &  \begin{tiny} 'AMD', 'WDC', 'AMZN', 'NTAP', 'CTSH', 'SBUX', 'COO', 'CRM', 'ISRG', 'FTI', \end{tiny} \\
    && \begin{tiny} 'CCI', 'CDE', 'ALGN', 'NFLX', 'PVH', 'GOOG', 'A', 'MOS', 'WHR', 'DPZ',
         \end{tiny} \\
    && \begin{tiny} 'SLG', 'TJX', 'NOV', 'EMN', 'RCL', 'WAT', 'FLS', 'TYL', 'AKAM', 'EW',
           \end{tiny} \\
    && \begin{tiny} 'GPN', 'PKG', 'ADI', 'CMI', 'EBAY', 'IVZ', 'JCI', 'MSFT', 'BEN', 'NVR',
        \end{tiny} \\
    && \begin{tiny} 'PRU', 'SIVB', 'WBA', 'ALB', 'APD', 'HST', 'KSS', 'JNPR', 'EL', 'BWA'
         \end{tiny} \\
        \hline 
    2008 & 2009 &  \begin{tiny}
        'AMGN', 'HRB', 'EW', 'ODFL', 'ALK', 'HAS', 'NFLX', 'GILD', 'RLI', 'GIS',
    \end{tiny}\\
    && \begin{tiny}
       'AJG', 'ABMD', 'WRB', 'CHD', 'PHM', 'SHW', 'CHRW', 'WM', 'TSCO', 'DGX',
    \end{tiny} \\
    && \begin{tiny}
         'JBHT', 'LOW', 'SO', 'ORLY', 'ABT', 'ROL', 'NEOG', 'WST', 'TYL', 'BAX',
    \end{tiny} \\
    && \begin{tiny}
         'MMC', 'ACN', 'GWW', 'ADP', 'AAP', 'LEG', 'VMC', 'HSY', 'MAA', 'FAST',
    \end{tiny} \\
    && \begin{tiny}
      'ILMN', 'HD', 'NVR', 'O', 'DVA', 'CB', 'CLX', 'AOS', 'WEC', 'TRV'  
    \end{tiny} \\
   \hline
   2007 &2008 &  \begin{tiny}
      'MOS', 'ISRG', 'NOV', 'AMZN', 'CMI', 'NEOG', 'HES', 'FTI', 'FLS', 'ATVI',
   \end{tiny}\\
   && \begin{tiny}
      'CRM', 'JNPR', 'WAT', 'BWA', 'VRSN', 'APA', 'IDXX', 'SLB', 'OXY', 'ILMN', 
   \end{tiny} \\
   && \begin{tiny}
       'GOOG', 'WDC', 'TXT', 'PH', 'ADM', 'CPRT', 'AME', 'FMC', 'GILD', 'HUM',
   \end{tiny} \\
   && \begin{tiny}
      'APD', 'WMB', 'BRK-A', 'SYK', 'JBHT', 'SCHW', 'CHRW', 'CCI', 'YUM', 'ROL', 
   \end{tiny} \\
   && \begin{tiny}
        'PKG', 'IVZ', 'TMO', 'CHD', 'MLM', 'BIO', 'BAX', 'NEE', 'PCAR', 'NTRS'
   \end{tiny} \\
    \hline
   2006 & 2007 &  \begin{tiny}
       'ILMN', 'AKAM', 'ALGN', 'WST', 'ALB', 'SLG', 'WYNN', 'TYL', 'IVZ', 'CSCO',
   \end{tiny} \\
   && \begin{tiny}
       'PVH', 'ABMD', 'BXP', 'VFC', 'TMO', 'CTSH', 'MOS', 'T', 'IFF', 'FTI',
   \end{tiny} \\
   && \begin{tiny}
       'NTAP', 'ES', 'KSS', 'FMC', 'MAR', 'LH', 'SHW', 'CAG', 'LNT', 'AMT',
   \end{tiny} \\
   && \begin{tiny}
       'PCAR', 'KIM', 'HAS', 'MLM', 'NEE', 'UDR', 'CPB', 'ADM', 'SLB', 'SNPS',
   \end{tiny} \\
   && \begin{tiny}
      'REG', 'SPG', 'VMC', 'SCHW', 'CPRT', 'MDLZ', 'CMI', 'WAT', 'ACN', 'HST'
   \end{tiny} \\
   \hline  
    \end{tabular}
        \caption{Reduced universes for backtesting: For year $y\in\{2010,2009, 2008, 2007, 2006\}$, $S_{50}$ is obtained using Algorithm \ref{Alg:dr_omv}.}
    \label{tab:RU06-10}
\end{table}

In the next section, we compare the out-of-sample performance of the hedge score based selection of $K$ assets with alternative methods for selecting $K$ assets in portfolio construction.

\begin{figure}[htp]
    \centering
    \subfigure[]{\includegraphics[width=0.40\textwidth]{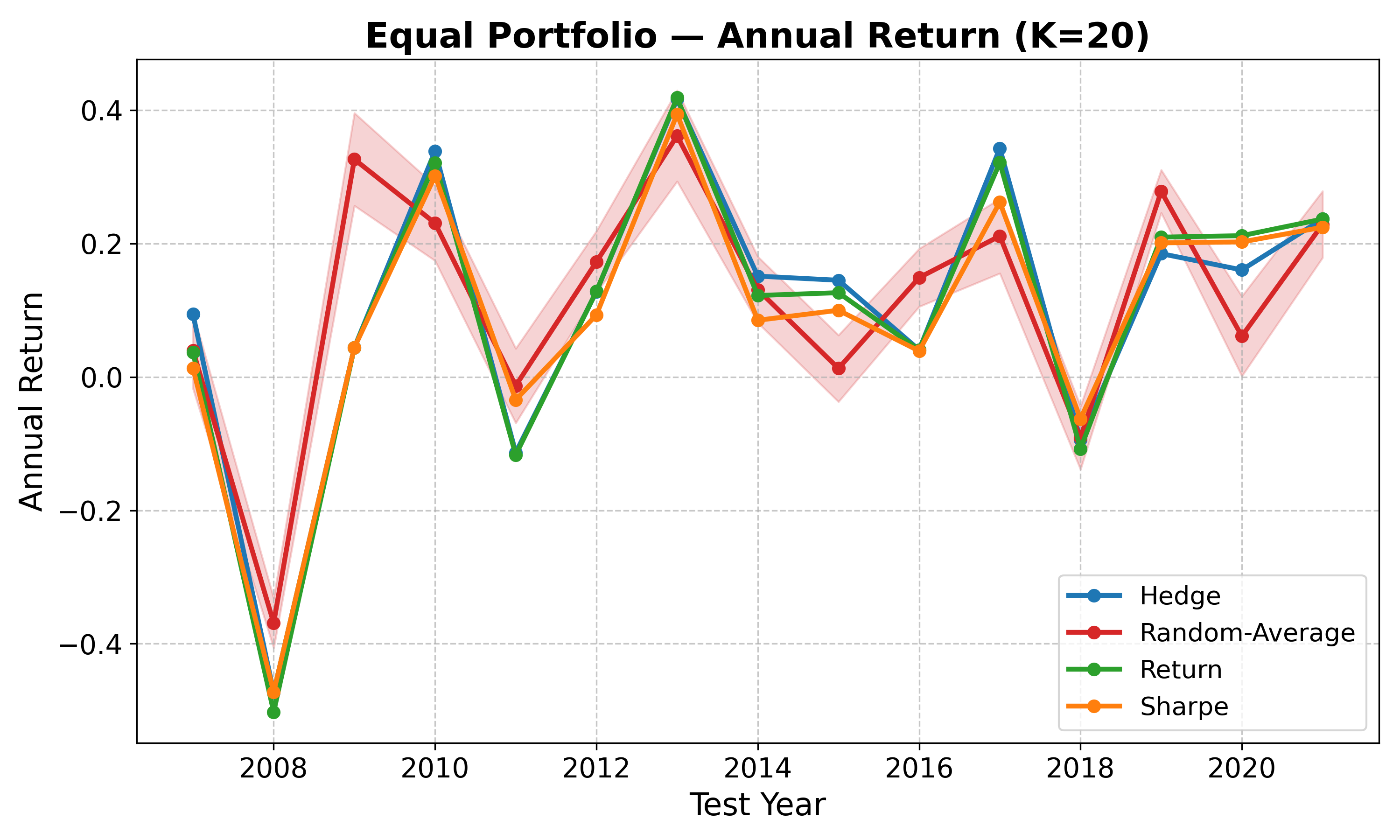}}
    ~ ~ ~
    \subfigure[]{\includegraphics[width=0.40\textwidth]{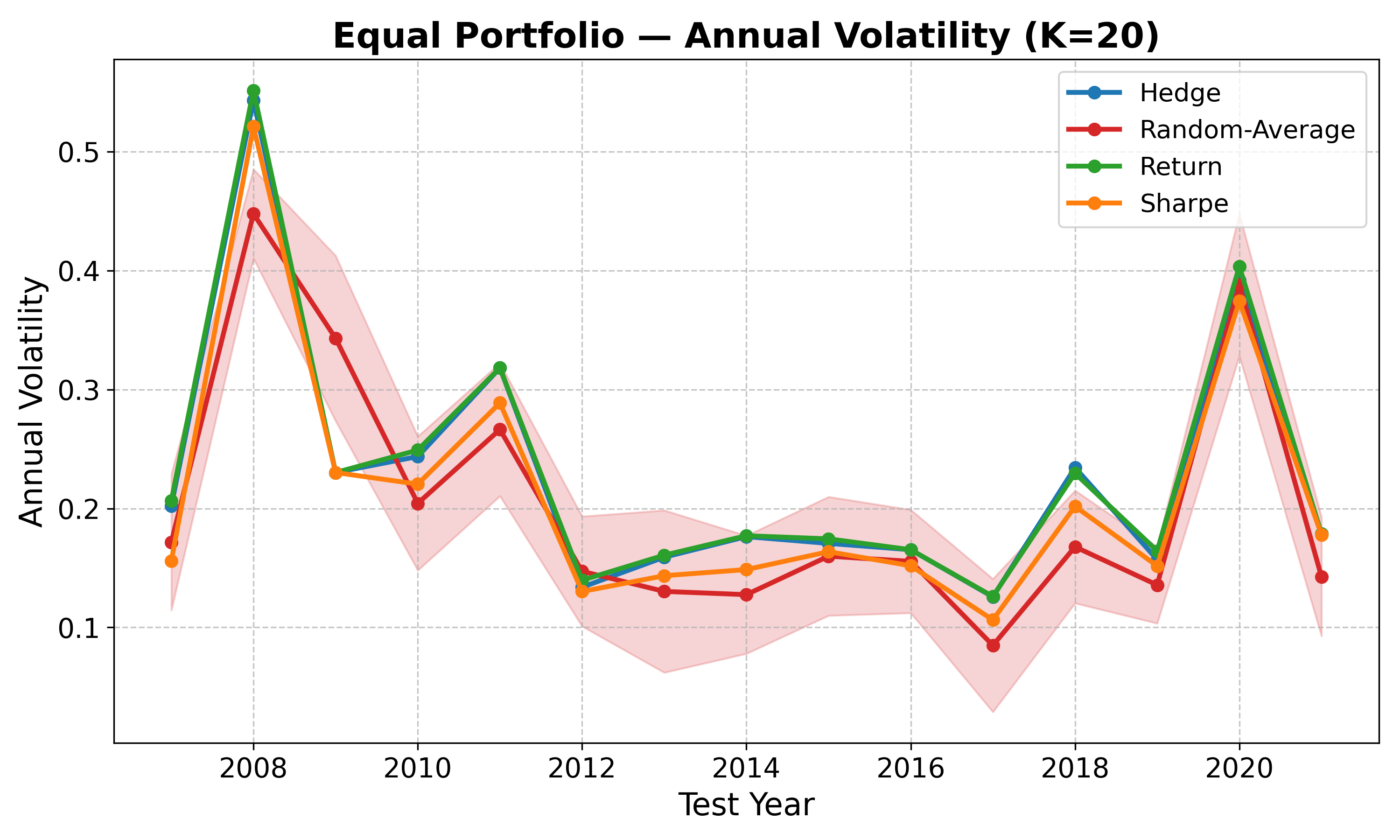}}
    ~ ~ ~
    \subfigure[]{\includegraphics[width=0.40\textwidth]{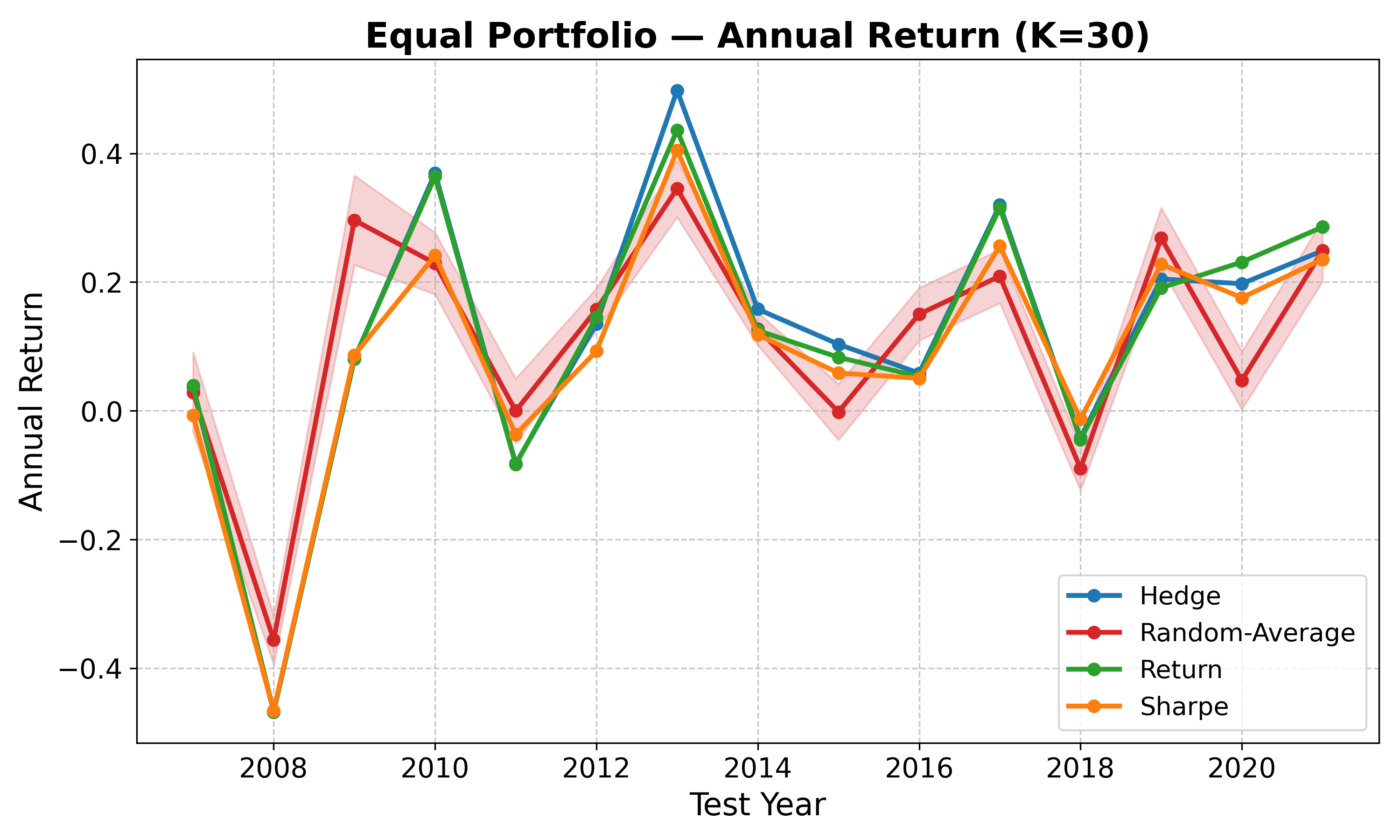}} 
    ~~~
    \subfigure[]{\includegraphics[width=0.40\textwidth]{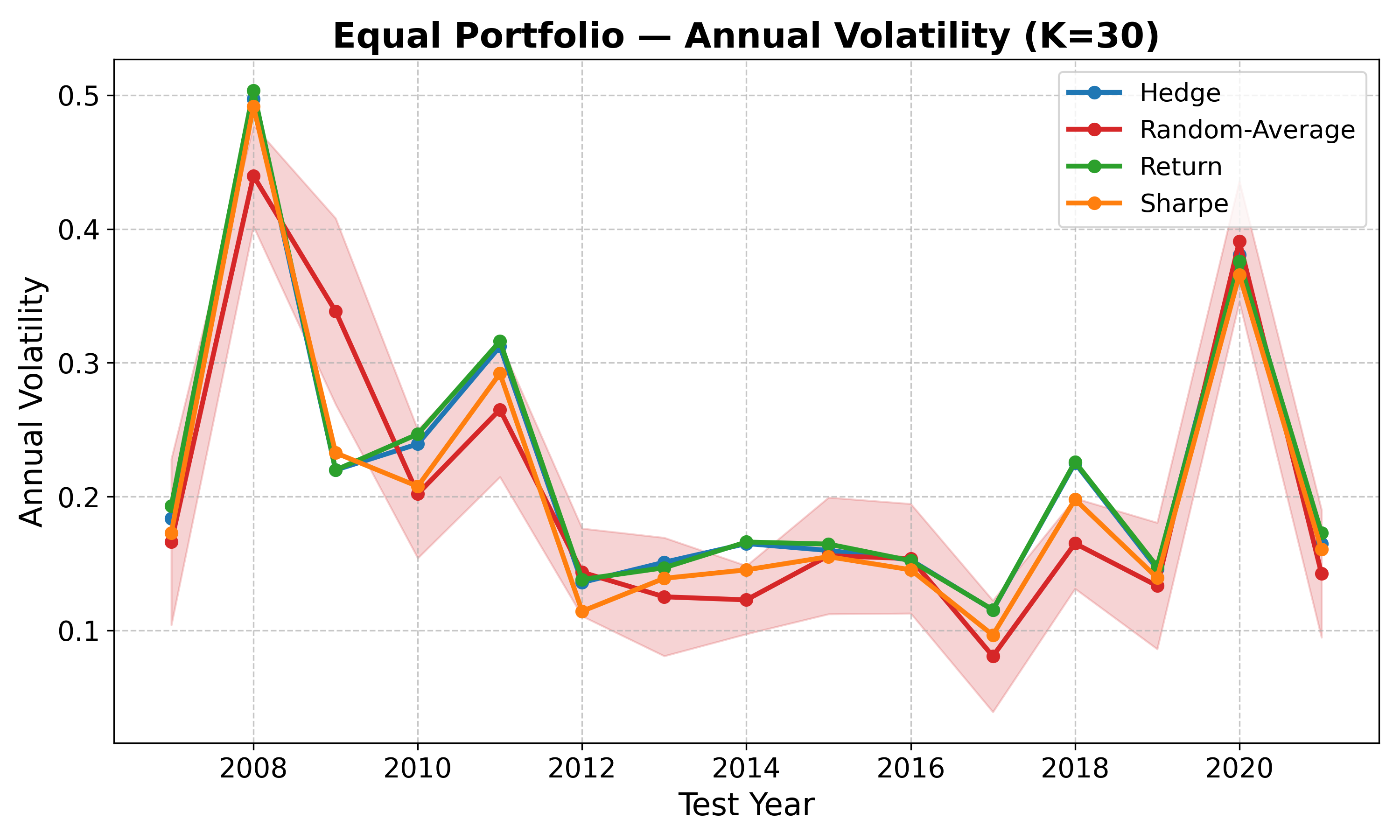}} 
    ~~~
    \subfigure[]{\includegraphics[width=0.40\textwidth]{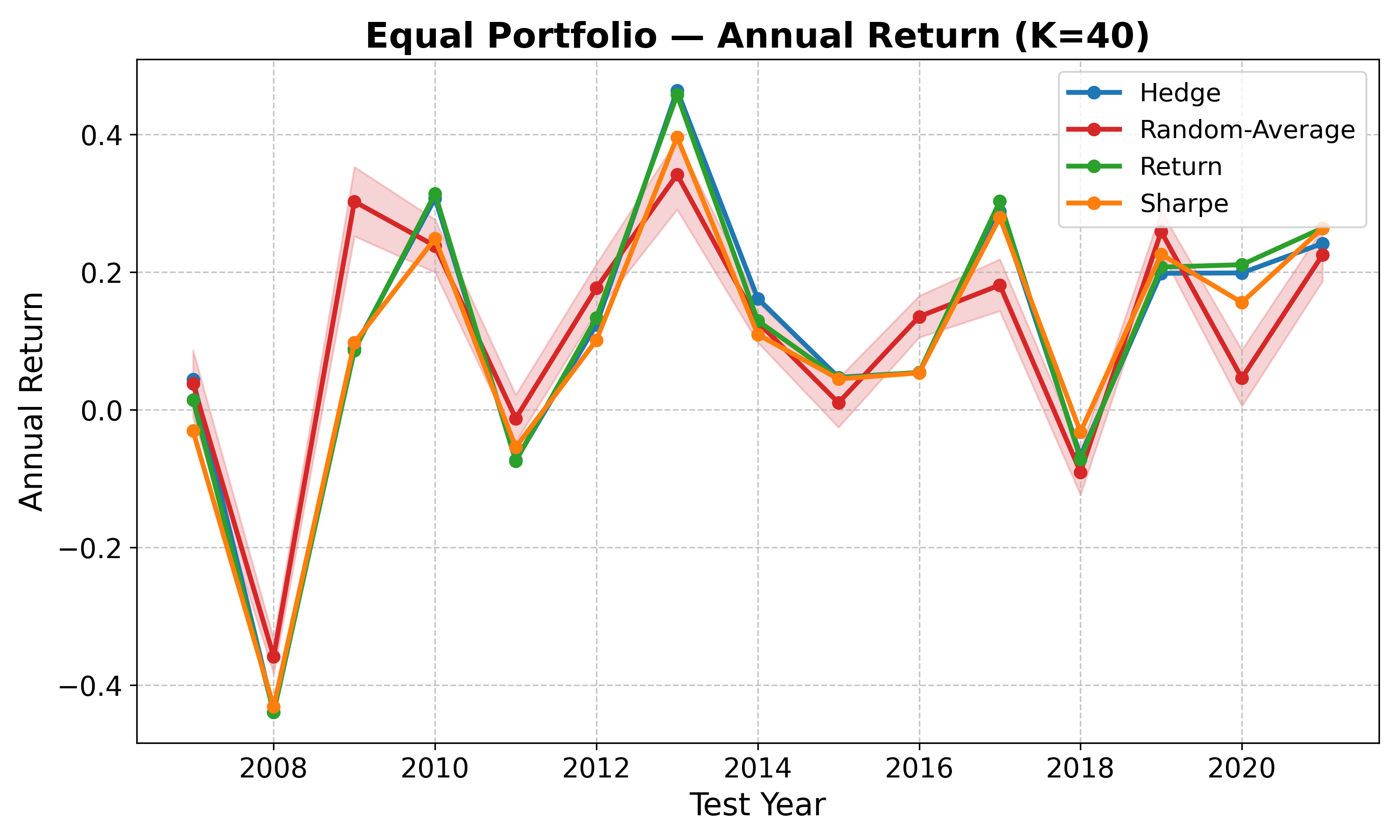}}
    ~~~
    \subfigure[]{\includegraphics[width=0.40\textwidth]{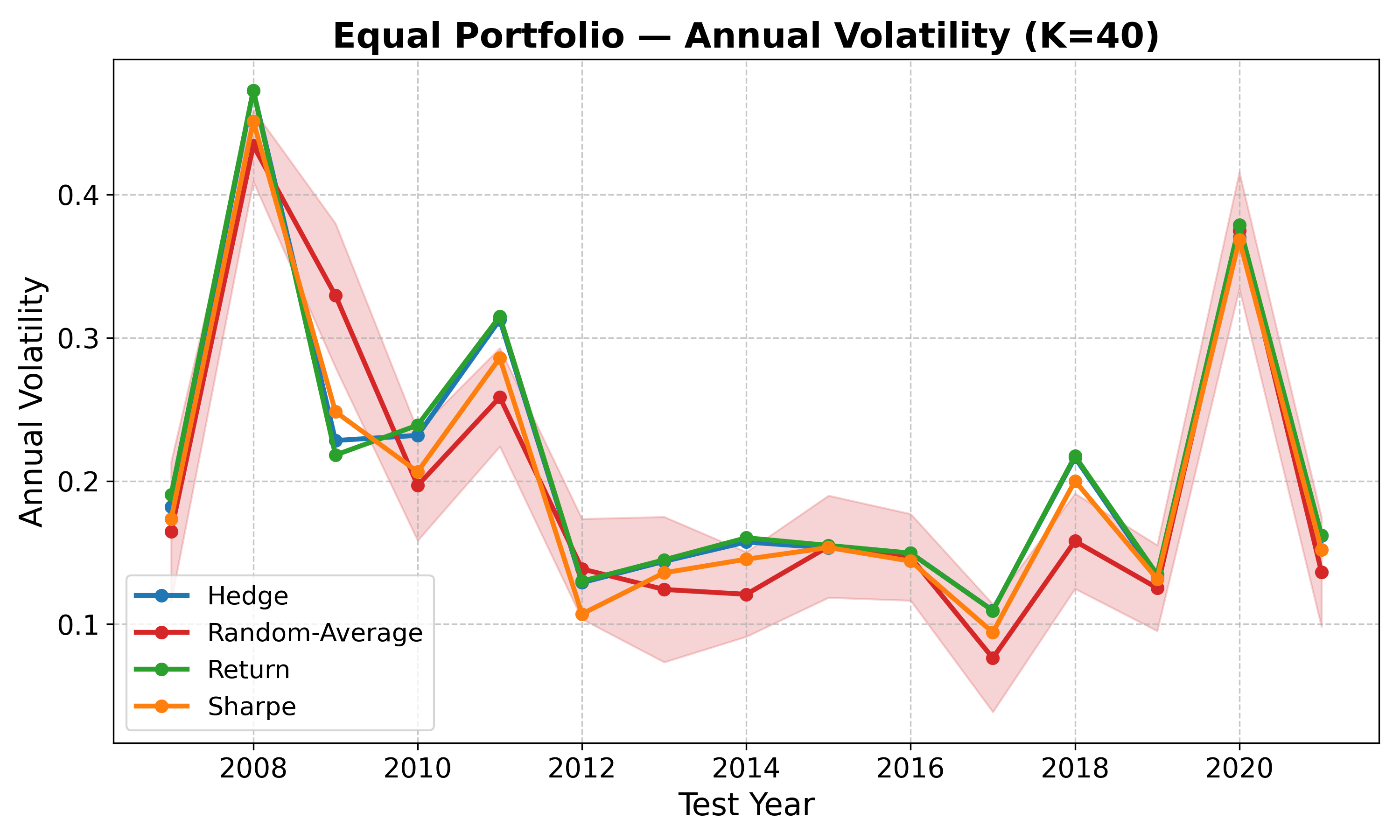}}
    ~~~
    \subfigure[]{\includegraphics[width=0.40\textwidth]{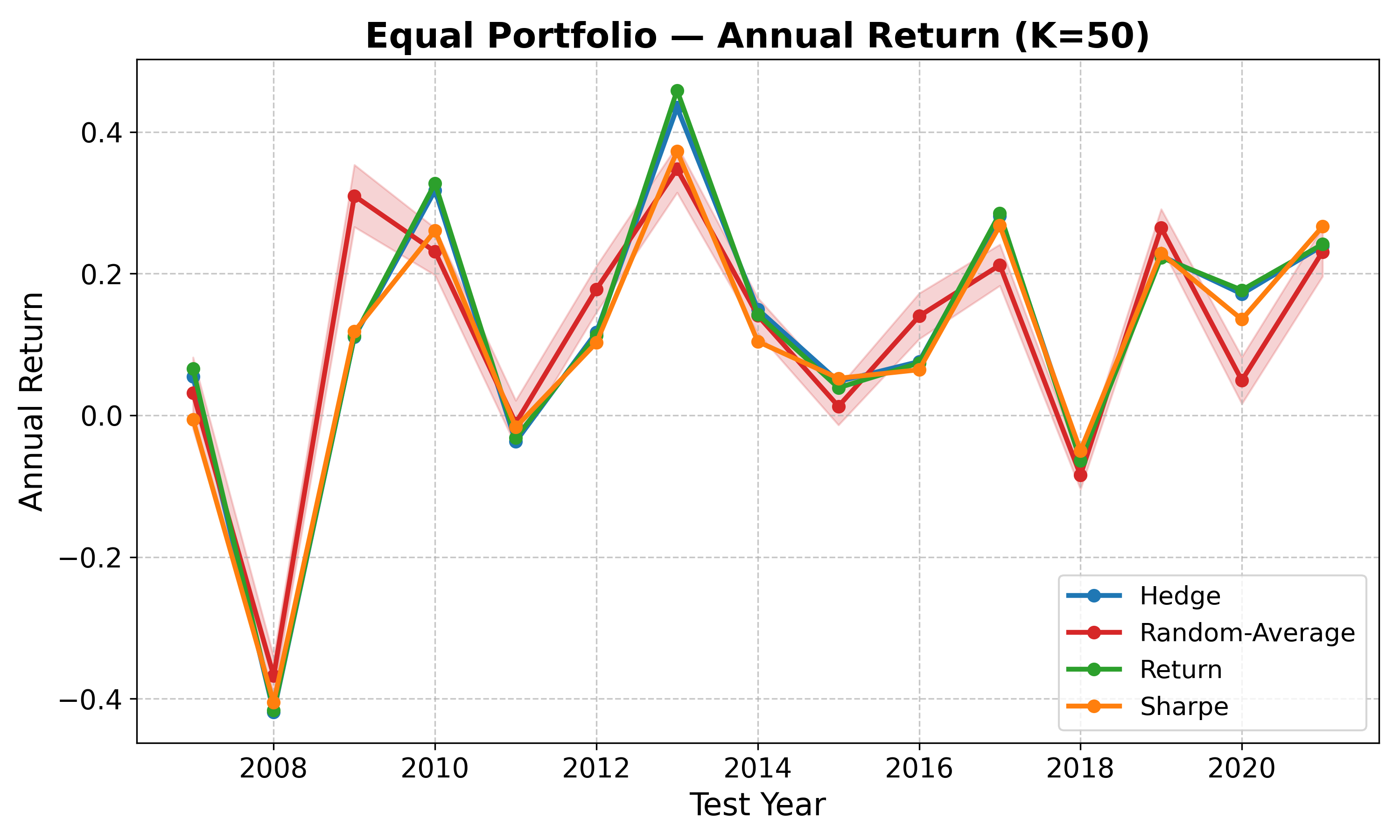}}
     ~~~
    \subfigure[]{\includegraphics[width=0.40\textwidth]{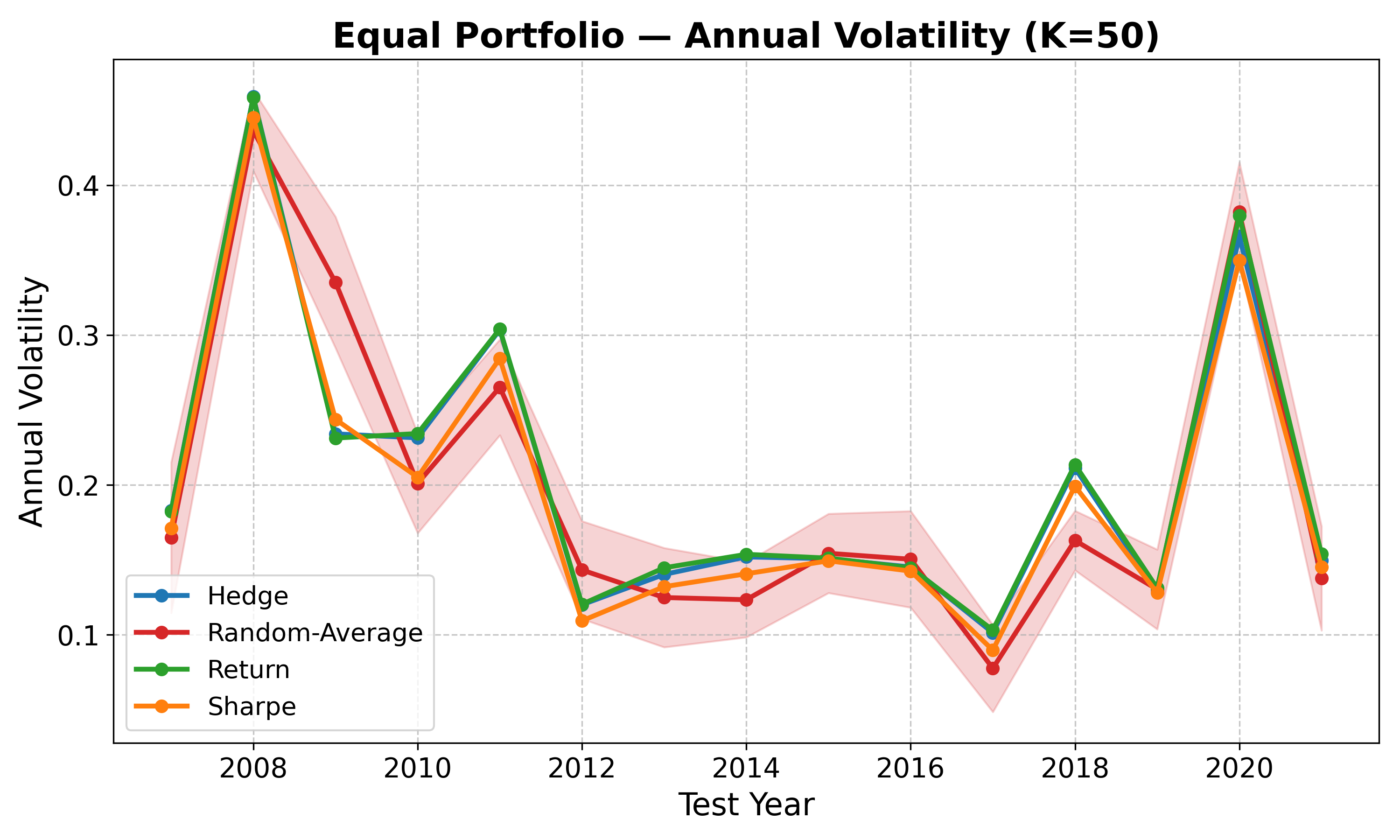}}
    \caption{Annual return (a), (c), (e), (g) and annual volatility (b), (d), (f), (g) comparison for equally weighted portfolio formation, setting the value of $K=20,30,40,$ and $50$, respectively.}
    \label{fig:equalreturn_compare}
\end{figure}

\begin{figure}[htp]
    \centering
    \subfigure[]{\includegraphics[width=0.40\textwidth]{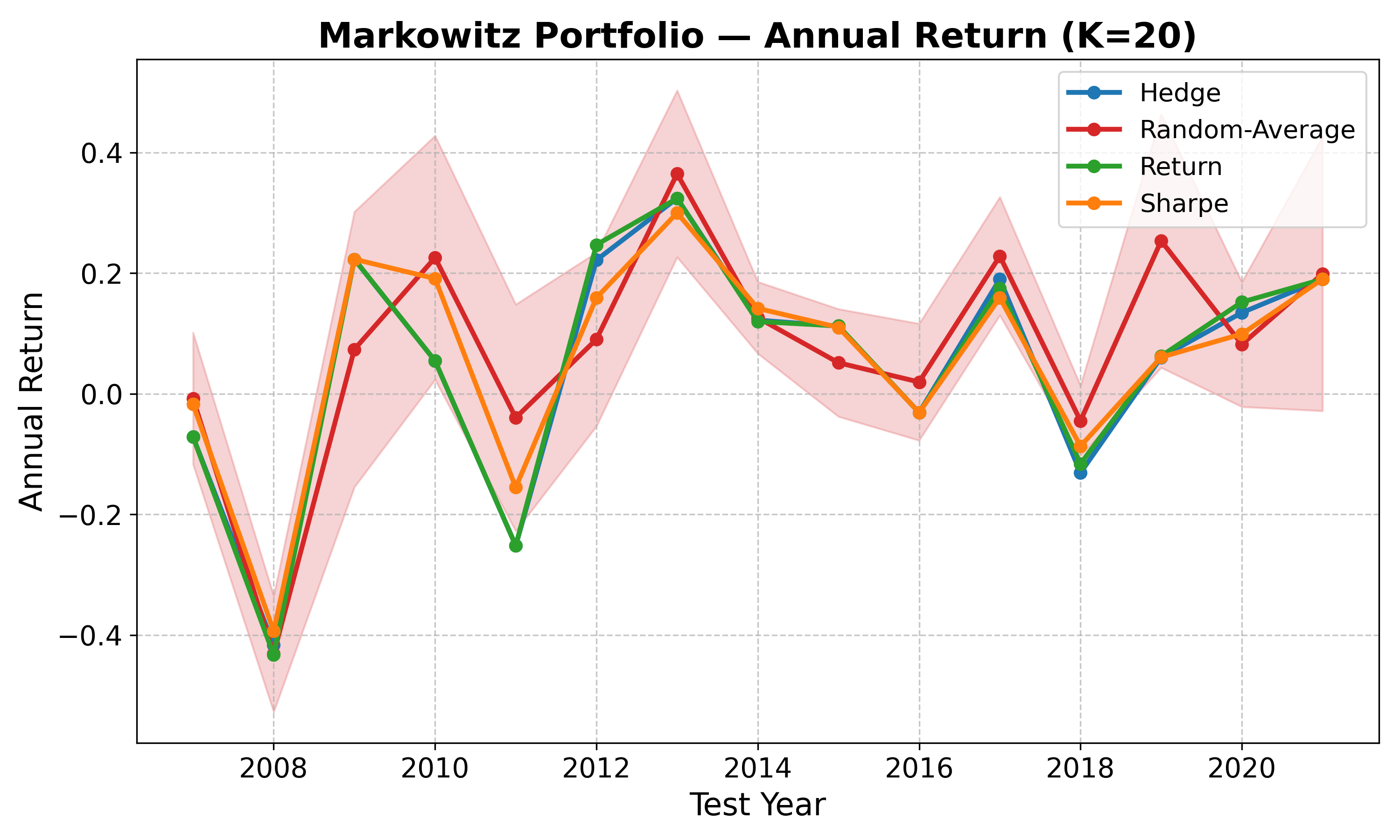}}
    ~ ~ ~
     \subfigure[]{\includegraphics[width=0.40\textwidth]{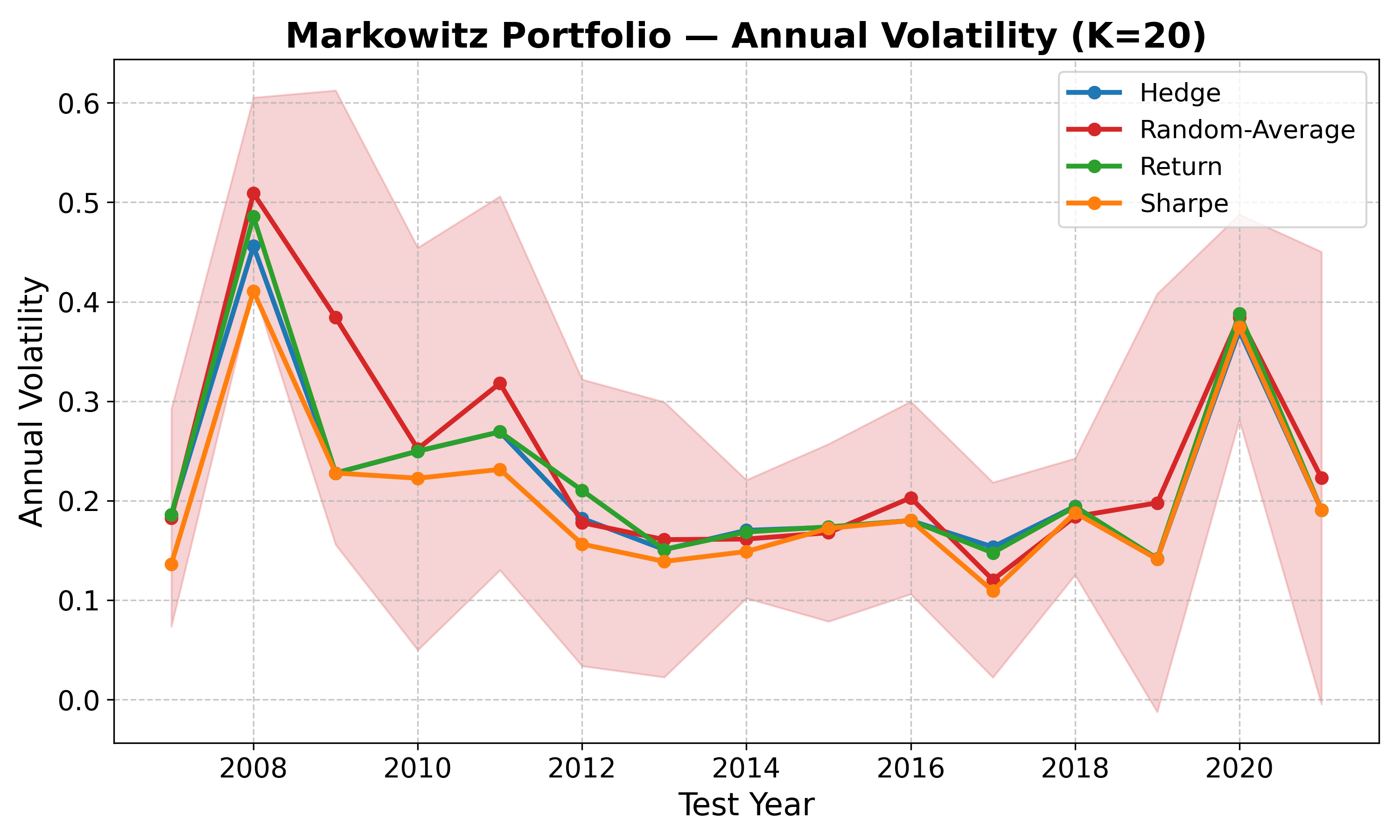}}
    ~ ~ ~
    \subfigure[]{\includegraphics[width=0.40\textwidth]{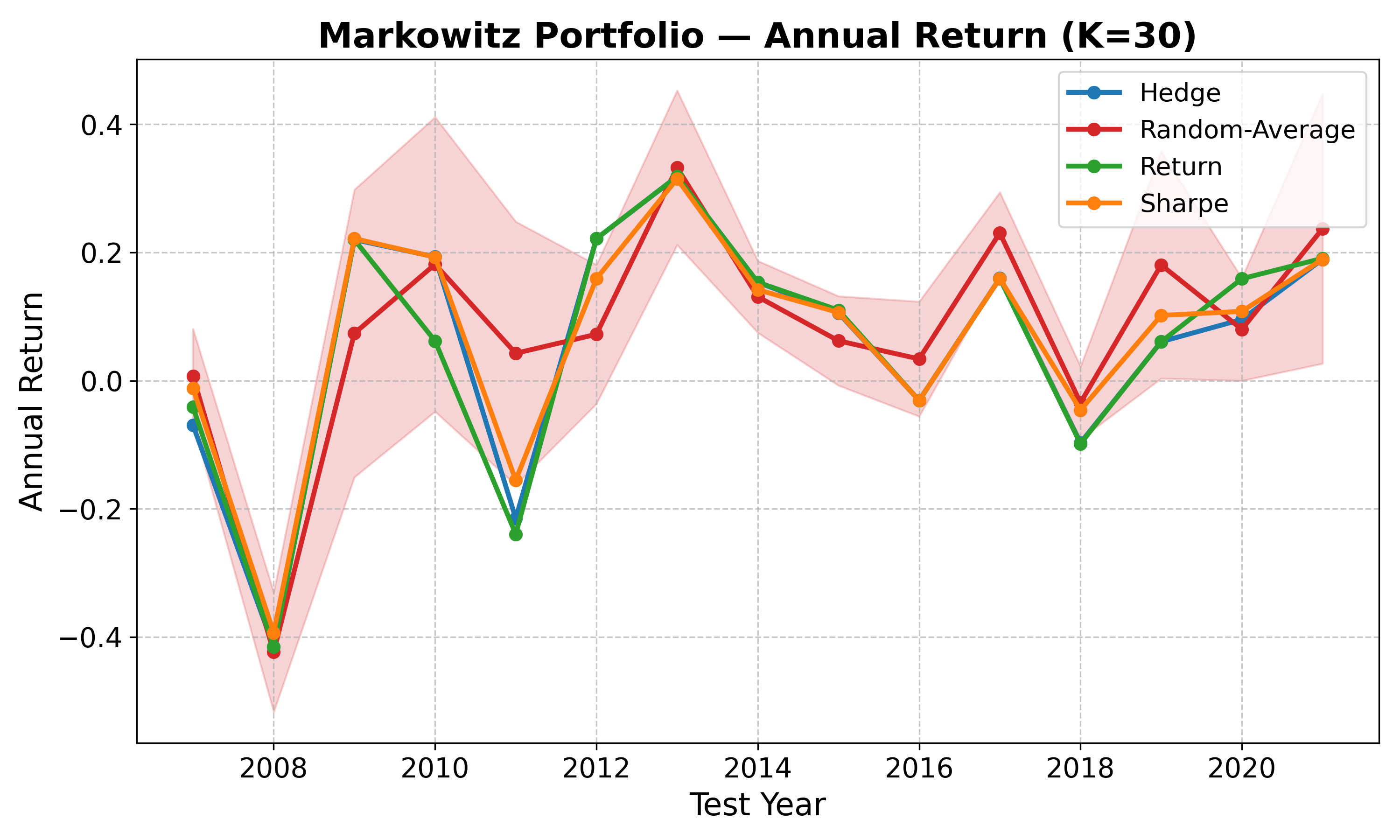}} 
    ~~~
     \subfigure[]{\includegraphics[width=0.40\textwidth]{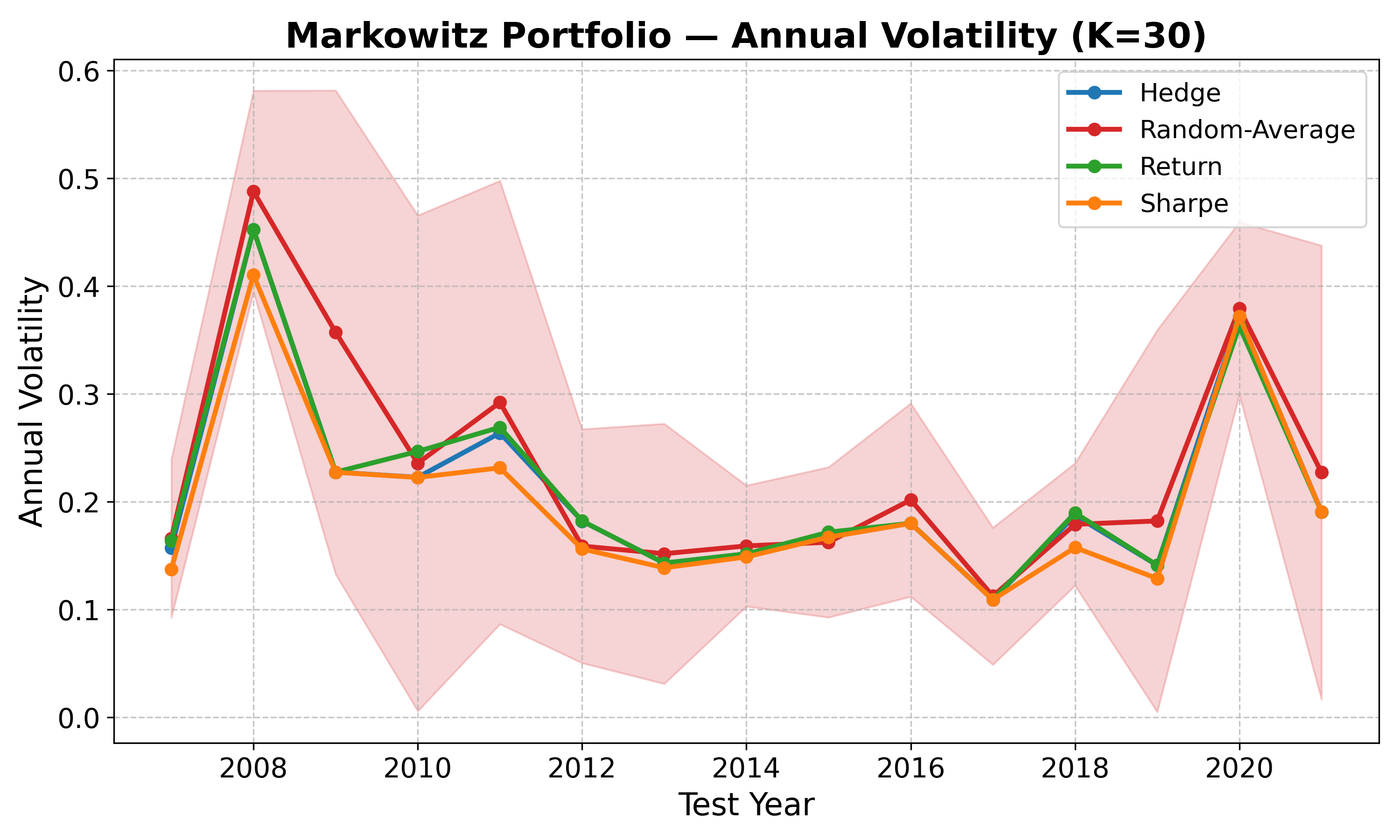}} 
    ~~~
    \subfigure[]{\includegraphics[width=0.40\textwidth]{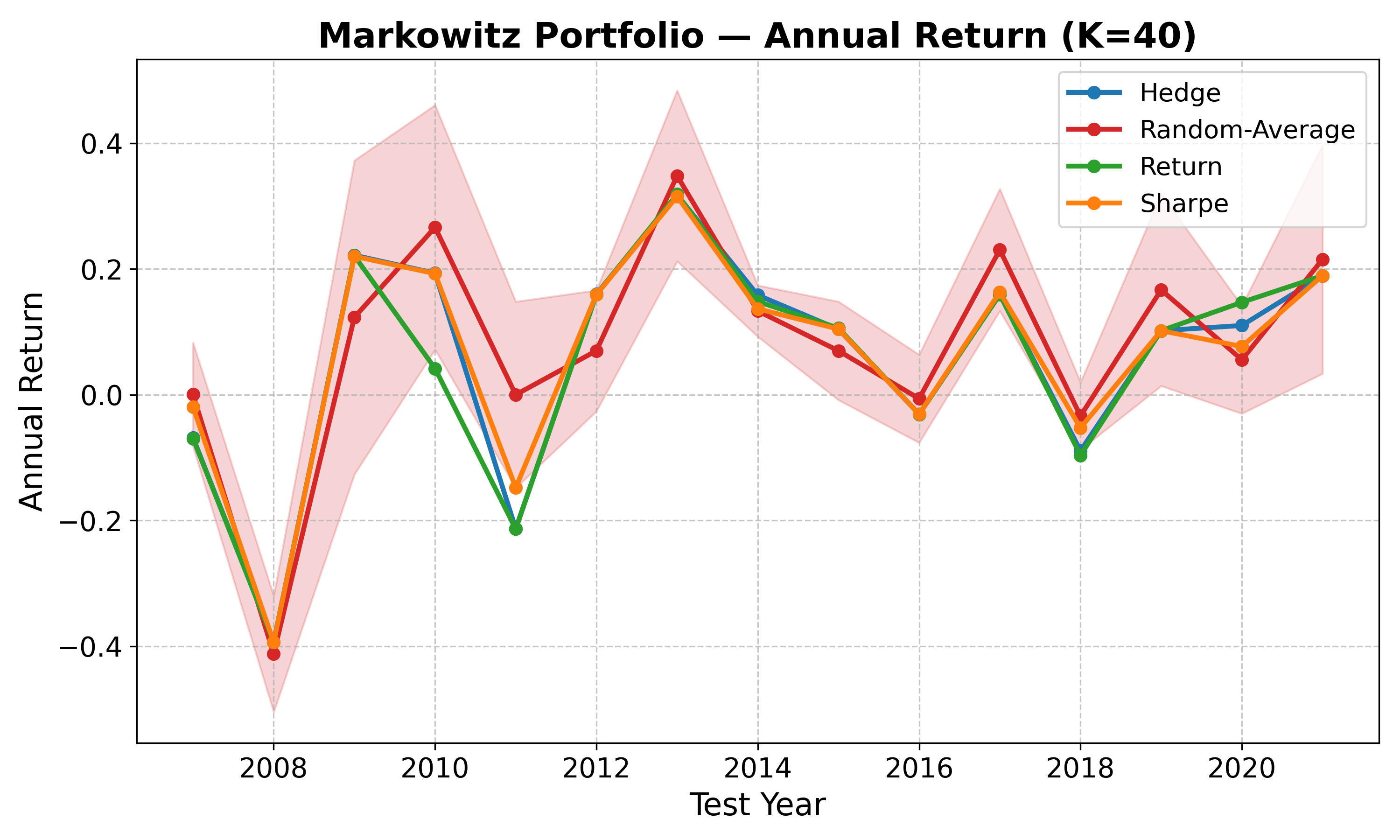}}
     ~~~
    \subfigure[]{\includegraphics[width=0.40\textwidth]{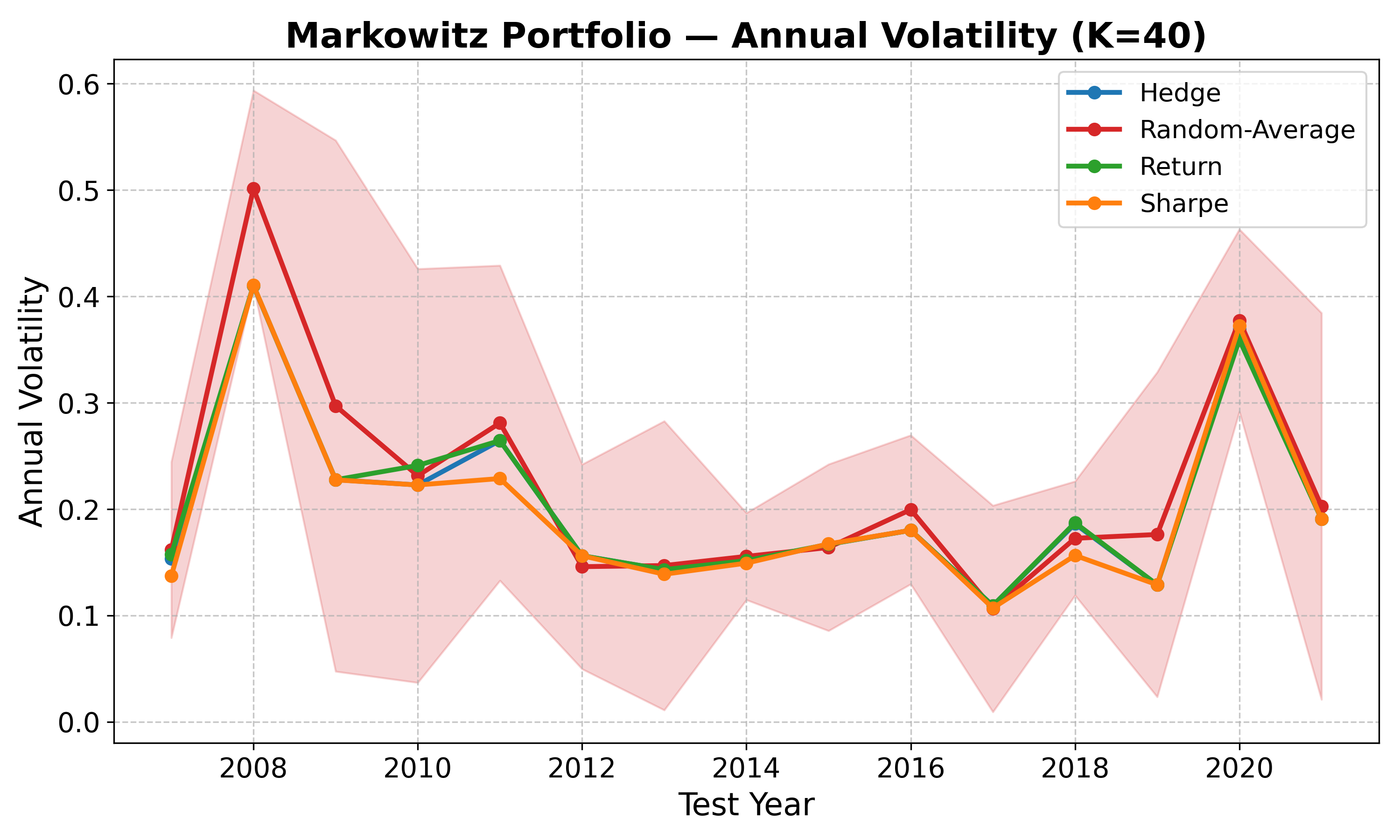}}
    ~~~
    \subfigure[]{\includegraphics[width=0.40\textwidth]{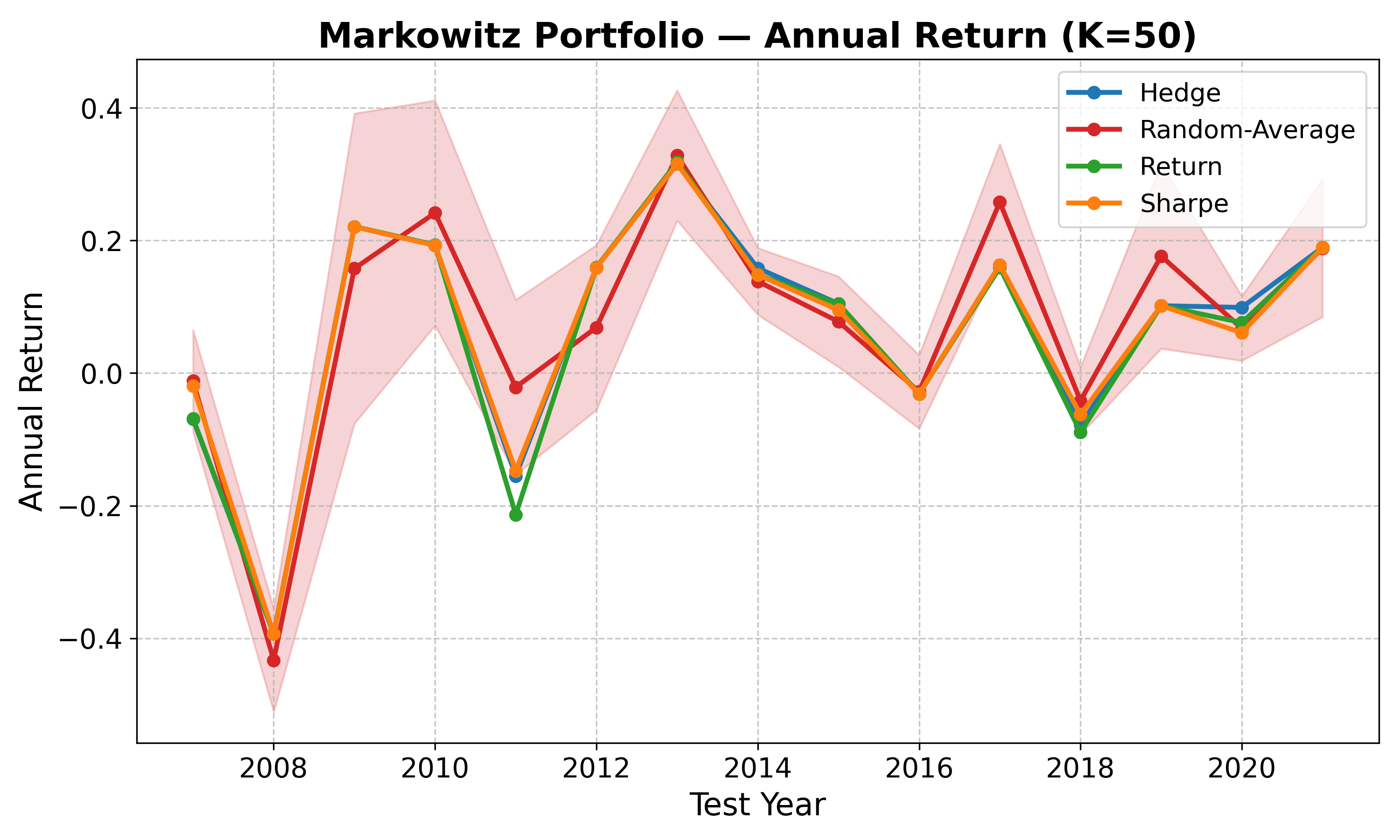}}
     ~~~
    \subfigure[]{\includegraphics[width=0.40\textwidth]{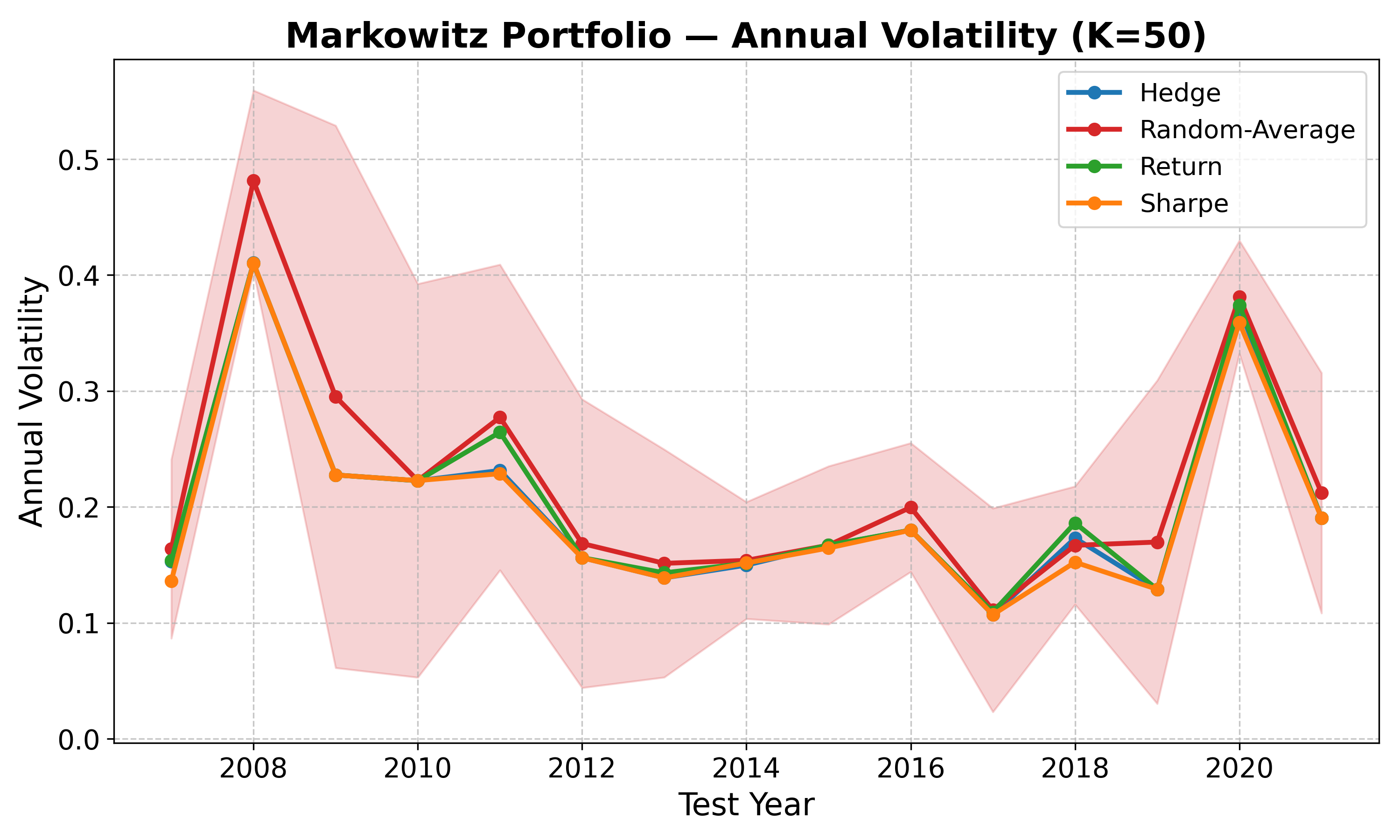}}
    \caption{Annual return (a), (c), (e), (g) and annual volatility (b), (d), (f), (g) comparison for Markowitz-type portfolio obtained by maximizing the Sharpe ratio, setting the value of $K=20,30,40,$ and $50$, respectively. }
    \label{fig:Markreturn_compare}
\end{figure}

\subsection{Comparison of out-of-sample evaluation}

We evaluate the proposed hedge-score-based framework for constructing portfolios of $K$ assets and compare it with alternative selection methods that capture different aspects of asset behavior. To mimic realistic investment conditions, we adopt a rolling out-of-sample protocol as in the previous section: for each year $y \in \{2006,\ldots,2020\}$, asset selection is performed using data from year $y$, and the resulting portfolio is evaluated on the subsequent year $y+1$. This generates a sequence of out-of-sample performance measures while avoiding look-ahead bias.The proposed hedge-score-based method ranks assets according to the hedge-adjusted score
$s_i = h_i \mu_i$. For comparison, we also consider return-based selection, which ranks assets by mean return, and Sharpe-based selection, which ranks assets by individual risk-adjusted performance. As an additional benchmark, we include a random selection strategy in which subsets of size $K$ are sampled uniformly without replacement, with performance averaged over multiple independent trials to ensure robustness.

Given a selected subset, portfolios are constructed using two standard allocation schemes. The equally weighted portfolio assigns uniform weights $w_i = 1/K$, providing a simple and stable benchmark. In addition, we consider a Markowitz-type portfolio obtained by maximizing the Sharpe ratio subject to long-only constraints $w_i \geq 0$ and $\sum_i w_i = 1$, thereby incorporating risk-return trade-offs within the reduced asset universe. Portfolio performance is evaluated using daily log-returns in the test period. To assess robustness with respect to the portfolio size, the analysis is repeated across multiple values of $K \in \{20,30,40,50\}$, enabling a systematic study of the trade-off between diversification and concentration.

Empirically, we observe that the hedge-score-based and mean return-based selections of $K$ assets provides competitive and often superior performance in terms of annual returns across most of the years; see Figures~\ref{fig:equalreturn_compare} (a), (c), (e), and (g) for the equally weighted portfolio. This phenomenon can be justified from the fact that the selection of a set of assets is decided by $\sum_{i} h_i\mu_i$ and $\sum_i \mu_i$ for the hedge score based and the only return-based strategies, respectively. While the average annual return over $30$ random selection of assets occasionally outperforms in specific years (e.g., 2008, 2009, and 2019), this comes at the cost of significantly higher variance of both annual return and annual volatility, as reflected by the wide shaded regions in the corresponding plots.  The annual volatility remains relatively high for both hedge-score-based and return-based selections across all values of $K$; see Figures~\ref{fig:equalreturn_compare} (b), (d), (f), and (h). 

For Markowitz-type portfolios based on Sharpe ratio maximization, the average annual return for $30$ random selection of assets continues to display high annual returns and volatilities, accompanied by large dispersion, as illustrated in Figures~\ref{fig:Markreturn_compare} (a)--(h). Notably, the variance associated with random selection is significantly larger in the Markowitz setting compared to the equally weighted case. This indicates that random selection does not yield a stable or reliable strategy for dimensionality reduction, particularly when combined with optimization-based portfolio construction.

Finally, the numerical simulations indicate that equally weighted portfolio formation based on hedge-score-driven dimensionality reduction provides a consistent edge over alternative selection methods. This observation motivates the exploration of motif-based, higher-moment portfolio construction as formulated in Eqn.~(\ref{eqn:opts2}), which remains a challenging direction for future research. Combined with a rigorous rolling out-of-sample evaluation and systematic comparison against standard baselines, the proposed framework offers a principled approach to incorporating network-based structural information into portfolio optimization.\\

\noin{\bf Conclusion.} 
We develop a unified framework linking signed network representations with higher-order combinatorial structure and portfolio optimization. We propose a framework for dimensionality reduction in portfolio optimization based on a complete signed network representation of a financial market constructed per trading-day basis. Unlike conventional approaches, the framework does not explicitly rely on covariance or correlation matrices. Instead, the sign of an edge between a pair of assets is determined by whether their daily log returns move in the same direction relative to their respective mean return values over a given time window. By considering a weighted signed graph representation of the market, we demonstrate that negative edges naturally capture hedging relationships in the sense of Markowitz’s portfolio theory. Building on this insight, we introduce a hedge score for each asset over a specified time window, which is then used to formulate a dimensionality reduction strategy based on both hedge scores and mean returns. A primary motivation of dimensionality reduction is equally weighted portfolio formation, which a competitive alternative to Markowitz's portfolio formation.

To further incorporate higher-order moments, we develop a combinatorial framework that links skewness and kurtosis to structural patterns in signed graphs. This leads to dimensionality reduction criteria based on identifying subsets of assets that induce complete subgraphs with a high density of balanced triangles and balanced $4$-cliques, denoted as $K_4^{B_2}$ (see Figure \ref{fig:bK4}), of a specific signed configuration. In this context, we show that finding $K_4^{B_2}$-dense subgraph of size $K$ is an NP-hard problem for complete signed networks. Finally, we evaluate the performance of the proposed dimensionality reduction framework through empirical analysis, comparing Markowitz mean-variance optimization and equally weighted portfolio construction within the reduced asset universe defined by hedge scores. The evaluation is conducted on a dataset of 199 assets from the S\&P 500 market, aligned with Google stock data, spanning the period from 2006 to 2021.

Our study of combinatorial criteria for higher-order moments within a signed graph representation of financial markets introduces NP-hard problems in signed graph theory. We establish that developing efficient approximation methods for motif-dense subgraph discovery in signed networks can play a significant role in advancing computational finance, particularly for dimensionality reduction of portfolio optimization. \\

\noin{\bf Acknowledgment.}
The author thanks Hannes Leipold, Sarvagya Upadhyay,  Hirotaka Oshima, and Yasuhiro Endo for their insightful comments and discussion. The author thanks Steven Kordonowy for thoughtful comments. The authors thank the anonymous reviewers for their constructive feedback and valuable suggestions, which have significantly improved the manuscript.

\bibliographystyle{unsrt}
\bibliography{reff}
\end{document}